\documentclass[12pt]{amsart}%
\usepackage{amsfonts}
\usepackage{amsmath}
\usepackage{amssymb}
\usepackage{graphicx}
\usepackage{xcolor}
\usepackage{hyperref}
\makeatletter
\@addtoreset{equation}{section}
\makeatother

\newtheorem{theorem}{Theorem}[section]

\newtheorem{definition}[theorem]{Definition}

\newtheorem{lemma}[theorem]{Lemma}

\newtheorem{proposition}[theorem]{Proposition}
\newtheorem{remark}[theorem]{Remark}

\begin{document}

\title[Classification for singular Liouville's equation]{Classification of solutions to the singular Liouville's equation associated with the $N$ Finsler Laplacian}

\author{Jianwei Xue and Maochun Zhu}

\address{School of Mathematics and Statistics\\
Nanjing University of Science and Technology\\
Nanjing, 210094, P. R. China\\}

\email{jzxjw88@126.com}

\address{School of Mathematics and Statistics\\
Nanjing University of Science and Technology\\
Nanjing, 210094, P. R. China\\}

\email{zhumaochun2006@126.com}

\thanks{The second author was supported by Natural Science Foundation of China (12571122). {\it Mathematics Subject Classification}. 35J92, 35B53, 35B40, 35B06, {\it Key words and phrases}: Finsler $N$-Laplacian, Classification, Liouville’s equation}

\begin{abstract}
In this paper, we classify a class of singular Liouville's equation associated with the Finsler-$N$-Laplacian for any $\beta\in (0,N)$
\begin{align*}
-\mathrm{div}\left(F^{N-1}(\nabla u)DF(\nabla u)\right)=\hat{F}^{o}(x)^{-\beta}e^u\ \ \text{in } \mathbb{R}^{N}\backslash \{0\},
\end{align*}
under the finite mass condition $\int_{\mathbb{R}^{N}}\hat{F}^{o}(x)^{-\beta}e^u dx<+\infty$. Here $F$ is a convex function, which is positively homogeneous of degree 1, and its polar $F^{o}$ represents a Finsler metric on $\mathbb{R}^{N}$, $\hat{F}^{o}(x)=F^{o}(-x)$. Our result relaxes the mass condition required in the classification result in \cite{LSXZ}.
\end{abstract}

\maketitle

\section{Introduction and main result}
In 1853, Liouville \cite{L} first discovered that solutions to the classical Liouville's equation
\begin{align}\label{e42}
    -\Delta u=e^u\ \ \text{in } \mathbb{R}^{2} 
\end{align}
can be expressed in terms of analytic functions on the complex plane $\mathbb{C}$. It plays an important role in partial differential equation, geometric and complex analysis. In 1991, Chen and Li \cite{CwL} proved, by using the isoperimetric inequality and the method of moving planes, that any solution of \eqref{e42} with the finite mass condition
\begin{align*}
    \int_{\mathbb{R}^{2}}e^{u}dx<+\infty,
\end{align*}
must be radially symmetric and take the explicit form
\begin{align*}
    u(x)=\log \frac{8\lambda^2}{(1+\lambda^2|x-x_0|^2)^2},
\end{align*}
for some $\lambda>0$ and $x_0\in \mathbb{R}^{2}$. We also note that alternative proofs were given by Chou and Wan \cite{CW} using complex analysis and by Chanillo and Kiessling \cite{CK} using a combination of the Pohozaev identity and the isoperimetric inequality. In addition, different techniques were employed to revisit this result in \cite{BLS,HW}.

Subsequently, extensive studies have been devoted to the classification of solutions to the generalized $N$-Liouville’s equation
\begin{align}\label{e43}
\begin{cases}
    -\Delta_N u:=|x|^{-\beta}e^u\ \ \text{in } \mathbb{R}^{N}\\
    \int_{\mathbb{R}^{N}}|x|^{-\beta}e^{u}dx<+\infty
\end{cases}
\end{align}
with integer $N\geq2$, real numbers $\beta<N$ and $\Delta_N u:=\mathrm{div}\left(|\nabla u|^{N-2}\nabla u\right)$.
 
In the case $\beta=0$, Esposito \cite{E} proved that any solution of \eqref{e43} must take an explicit radial form:
 \begin{align*}
    u(x)=\log \frac{(\frac{N}{N-1})^{N-1}(N)^{N}\lambda^N}{(1+\lambda^\frac{N}{N-1}|x-x_0|^\frac{N}{N-1})^N},
\end{align*}
 for some $\lambda>0$ and $x_0\in \mathbb{R}^N$. The proof strategy entails first applying the Kelvin transform to capture the asymptotic behavior at infinity, and then invoking the isoperimetric inequality together with the Pohozaev identity and ODE arguments to achieve the classification. 

When $\beta\neq0$, the situation becomes more delicate  due to the presence of the weighted term $|x|^{-\beta}$. 
 For $N=2$ and $\beta\in (0,2)$, Chen and Li \cite{CwL2} used the method of moving planes to proved that all the solutions of \eqref{e43} are radially symmetric and take the form
\begin{align*}
    u(x)=\log \frac{2(2-\beta)^{2}\lambda^2}{(1+\lambda^2 |x|^{2-\beta})^2},
\end{align*}
for some $\lambda>0$. It should be mentioned that Chanillo and Kiessling \cite{CK2} proved that radial symmetry fails for $\beta= -2k$, $k\in \mathbb{N}\backslash \{0\}$, since non-radial solutions also exist in this regime.  Nevertheless, a complete classification of solutions to \eqref{e43} for arbitrary $\beta<2$ was subsequently obtained by Prajapat and Tarantello \cite{PT}. Their proof crucially relies  on the extension, due to Chou and Wan \cite{CW}, of the Liouville formula \cite{L} to a punctured disc. For $N\geq3$, Esposito \cite{E2} proved that any radial solution of \eqref{e43} takes the form
\begin{align}\label{e52}
    u(x)=\log \frac{(\frac{N}{N-1})^{N-1}(N-\beta)^{N}\lambda^N}{(1+\lambda^\frac{N}{N-1}|x|^\frac{N-\beta}{N-1})^N},
\end{align}
for some $\lambda>0$. Dropping the radial symmetry restriction, Ciraolo, Esposito, and Li \cite{CEL} proposed a $P$-function approach that completely classifies solutions to \eqref{e43} for the full range $\beta\in[0,N)$.  In particular, Ciraolo et al. \cite{CEL} observed that the explicit radial solutions \eqref{e52} are unique for $\beta\in[0,N)$, whereas degeneracy occurs at certain negative values $\beta_k<0$, suggesting non-radial solutions might arise for  $\beta<0$.  Due to a wide range of applications in geometric analysis and partial differential equations (see \cite{Adimurthi04,BM,CJZ-SIAM,Deng14,Dru2,LS,Malchiodi,Ren94}), the classification of Liouville-type equations has been investigated on the half-space, Riemannian manifolds, the Heisenberg groups, etc., see \cite{Berchio23,Catino23,DGHP,Dou22,DHYZ,Lz95,Ma23,Sun25,Z} and the references therein.

In this paper, we are interested in classification of solutions to singular Liouville’s equation associated with the Finsler-$N$-Laplacian. Let $F:\mathbb{R}^{N}\rightarrow
\lbrack 0,+\infty ]$ be a convex function of class $C^2(\mathbb{R}^{N}\backslash \{0\})$ which satisfies the positively homogeneity property:
\begin{align*}
    F(tx)=tF(x),\ \ \forall x\in\mathbb{R}^{N},\ t>0
\end{align*}
and possibly non-symmetric ($F$ is not necessarily centrally symmetric, i.e. $F(x)\neq F(-x)$ in general). Let $F^{o}$ be the support function of $K:=\{x\in \mathbb{R}
^{N}:F(x)\leq 1\}$, which is defined by
\[
F^{o}(x)=\sup_{\xi \neq 0}\frac{\left\langle x,\xi \right\rangle }{
F(\xi )},
\]
and set
\begin{align}\label{e40}
\hat{F}^{o}(x)=F^{o}(-x),\ \ x\in\mathbb{R}^{N}.
\end{align}
The variational structure associated with $F$ gives rise to a family of nonlinear elliptic operators of Finsler type
\begin{align*}
    Q_pu:=\mathrm{div}\left(F^{p-1}(\nabla u)DF(\nabla u)\right),\ \ p>1,
\end{align*}
which we call the Finsler-$p$-Laplacian. These operators naturally appear in the Euler-Lagrange equations of anisotropic energies of the form
\begin{align*}
    \int F^p(\nabla u)dx.
\end{align*}
Anisotropic partial differential equations associated with the Finsler-$p$-Laplacian are of independent interest. Over the past few decades, this research direction has received considerable attention and has been extensively studied, see \cite{BFK,BC,CHN,CFR,DGL,HLYZ,LWY,LSXZ,WX,XG,ZZ} and the references therein.

The analysis of solution structures for such anisotropic problems is substantially more involved than in the Euclidean case, primarily because many classical symmetry properties—including rotation invariance—are no longer available. A fundamental problem in this area is the classification of entire solutions to the Finsler-$p$-Laplacian equation, a topic that has attracted considerable attention in recent literature. For instance, Ciraolo, Figalli and Roncoroni \cite{CFR} provided a complete classification result for the following critical Finsler-$p$-Laplacian equation
\begin{align}\label{e54}
\begin{cases}
    -Q_{p}u=u^{\frac{Np}{N-p}-1}& \text{in } \mathbb{R}^{N}, \\
    u>0& \text{in } \mathbb{R}^{N},\\
    u\in \mathcal{D}^{1,p}(\mathbb{R}^N)
\end{cases}\end{align}
for any $1<p<N$, where the Sobolev space $\mathcal{D}^{1,p}(\mathbb{R}^N)$ is defined as
\begin{align*}
    \mathcal{D}^{1,p}(\mathbb{R}^N) := \left\{ u \in L^{\frac{Np}{N-p}}(\mathbb{R}^N) : \nabla u \in L^p(\mathbb{R}^N) \right\}.
\end{align*}
Based on integral identities and asymptotic estimates, they proved that any solution of equation \eqref{e54} must be of the form
\begin{align*}
u(x)=\left( \frac{\lambda^{\frac{1}{p-1}} \left( N^{\frac{1}{p}} \left( \frac{N-p}{p-1} \right)^{\frac{p-1}{p}} \right)}{\lambda^{\frac{p}{p-1}} + \hat{F}^{o}(x-x_0)^{\frac{p}{p-1}}} \right)^{\frac{N-p}{p}}
\end{align*}
for some $\lambda>0$ and $x_0\in \mathbb{R}^{N}$. When $F$  is the Euclidean norm $| \cdot |$, this classification result recovers the celebrated theorems obtained in \cite{GNN} and \cite{CGS} for $p=2 
$, as well as their extensions to $1<p<N$  developed in \cite{Sc} and \cite{V}. 

Later, Ciraolo and Li \cite{CL} studied the anisotropic $N$-Liouville’s equation, which can be regarded as a counterpart of \eqref{e54} in the case $p=N$. More precisely, they showed that the solution of
\begin{align*}
\begin{cases}
    -Q_{N}u=e^u& \text{in } \mathbb{R}^{N}, \\
    \int_{\mathbb{R}^{N}}e^u dx<+\infty,
\end{cases}
\end{align*}
is completely classified by a family of explicit logarithmic functions:
\begin{align*}
u(x)=\log\frac{N(\frac{N^2}{N-1})^{N-1}\lambda^N}{\left[1+\lambda^{\frac{N}{N-1}}\hat{F}^{o}(x-x_0)^{\frac{N}{N-1}}\right]^N}
\end{align*}
for all $\lambda>0$ and $x_0\in \mathbb{R}^{N}$. Recently, the authors in \cite{LSXZ} studied the following anisotropic singular Liouville's equation
\begin{align}\label{e53}
\begin{cases}
    &-Q_{N}u=\hat{F}^{o}(x)^{-\beta}e^u \text{ in } \mathbb{R}^{N}\backslash \{0\}, \\
   & \int_{\mathbb{R}^{N}}\hat{F}^{o}(x)^{-\beta}e^u dx\leq N(\frac{N(N-\beta)}{N-1})^{N-1}\kappa_N,\\&\sup\limits_{x\in\mathbb{R}^{N}} u(x)< +\infty,
\end{cases}
\end{align}
where $\beta\in (0,N)$ and $\kappa_N$ is the measure of unit Wulff ball (for the precise definition see Section \ref{Anisotropic setting}).  Under this specific mass condition, they obtained the classification results for problem \eqref{e53}  by establishing  an anisotropic weighted isoperimetric inequality to complete the classification of solutions to problem \eqref{e53}. 

In this work, we are devoted to relaxing the mass condition in \cite{LSXZ}, and consider the following anisotropic singular Liouville’s equation with the finite mass condition:
\begin{align}\label{e1}
\begin{cases}
    -Q_{N}u=\hat{F}^{o}(x)^{-\beta}e^u& \text{in } \mathbb{R}^{N}\backslash \{0\}, \\
    \int_{\mathbb{R}^{N}}\hat{F}^{o}(x)^{-\beta}e^u dx<+\infty
\end{cases}
\end{align}
for $\beta\in (0,N)$. Our main result can be stated as following:

\begin{theorem}\label{Th1}
    Suppose $\beta\in (0,N)$. If $u$ is a weak solution
of \eqref{e1}, then it holds
\begin{align*}
u(x)=\log\frac{(\frac{N}{N-1})^{N-1}(N-\beta)^{N}\lambda^N}{\left[1+\lambda^{\frac{N}{N-1}}\hat{F}^{o}(x)^{\frac{N-\beta}{N-1}}\right]^N}
\end{align*}
for some $\lambda>0$.
\end{theorem}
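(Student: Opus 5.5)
The plan is to reduce to the setting of \cite{LSXZ} by proving that every weak solution of \eqref{e1} satisfies $\sup u<+\infty$ and has total mass exactly
\[
\int_{\mathbb{R}^N}\hat F^o(x)^{-\beta}e^u\,dx=N\Big(\tfrac{N(N-\beta)}{N-1}\Big)^{N-1}\kappa_N .
\]
Once both facts are established, the classification of \cite{LSXZ} for \eqref{e53} yields the explicit form in Theorem \ref{Th1} directly. (Alternatively, one could redo the anisotropic weighted isoperimetric argument with equality.)

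\textbf{Step 1: local behavior near $0$ and upper bounds.} Since $\beta<N$ and the mass is finite, I would use a Brezis--Merle type exponential integrability estimate for the Finsler $N$-Laplacian (via comparison with the Wolff potential or Serrin-type estimates) to show that $e^u\in L^q_{loc}$ for some $q>1$ away from the origin. Combining this with $\hat F^o(x)^{-\beta}\in L^{s}_{loc}$ for $s<N/\beta$ gives that the right-hand side lies in $L^{r}_{loc}(\mathbb{R}^N)$ for some $r>1$. Serrin's removable-singularity results and Harnack inequalities for quasilinear operators then show that $u$ extends across $0$: it is bounded above near $0$, it solves the equation in $\mathbb{R}^N$ in the distributional sense, and it is $C^{1,\alpha}$ away from $0$.

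\textbf{Step 2: asymptotics at infinity.} Set $\gamma=\int\hat F^o(x)^{-\beta}e^u\,dx$. Following Esposito and Ciraolo--Li \cite{CL}, I would prove
\[
u(x)=-\alpha\log F^o(x)+O(1)\quad\text{as }|x|\to\infty,\qquad \alpha=\Big(\frac{\gamma}{N\kappa_N}\Big)^{\frac1{N-1}},
\]
and moreover $x\cdot\nabla u\to-\alpha$. The tool is a comparison with the fundamental solution $\log\hat F^o$ of $Q_N$ on exterior domains, combined with a scaling and blow-down argument based on the anisotropic Kichenassamy--V\'eron asymptotics. Finiteness of the mass forces $\alpha\ge N-\beta$. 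To get the strict inequality needed for the boundary terms, I would use a bootstrap: first obtain the lower bound $u\ge-(\alpha+\varepsilon)\log|x|$, then feed this back into the integrability of $e^u$. This also yields $\sup u<\infty$.

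\textbf{Step 3: anisotropic Pohozaev identity.} I would test the equation with $x\cdot\nabla u$ on the annulus $\{\epsilon<F^o<R\}$, using the homogeneity identity $\langle DF(\xi),\xi\rangle=F(\xi)$ together with the weight's homogeneity $x\cdot\nabla(\hat F^o)^{-\beta}=-\beta(\hat F^o)^{-\beta}$. This gives
\[
(N-\beta)\gamma=\frac{N-1}{N}\,\lim_{R\to\infty}\oint_{\partial B_R}\cdots .
\]
The boundary terms at infinity are evaluated using Step 2; they contribute $\tfrac{N-1}{N}\,N\kappa_N\,\alpha^N$. The term at the origin vanishes because of the regularity from Step 1, the bound $\beta<N$, and the estimate $|\nabla u|\le C|x|^{-(\beta-1)_+}$ near $0$ (perhaps up to a logarithmic factor). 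Combining with $\gamma=N\kappa_N\alpha^{N-1}$ yields
\[
\alpha=\frac{N(N-\beta)}{N-1},
\]
which is exactly the mass value required in \eqref{e53}.

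\textbf{Main obstacle.} The hardest step is Step 2, and specifically the gradient asymptotics $x\cdot\nabla u\to-\alpha$ uniformly at infinity in this anisotropic, degenerate setting, because these asymptotics are what make the Pohozaev boundary terms computable. I would first try the blow-down argument: for $u_R(y)=u(Ry)+\alpha\log R$, use $C^{1,\alpha}$ compactness to show convergence to a $Q_N$-harmonic function on $\mathbb{R}^N\setminus\{0\}$ with an isolated singularity. Then apply the anisotropic isolated-singularity classification, as in Ciraolo--Li, to identify the limit as $-\alpha\log\hat F^o$ plus a constant. The gradient control near the origin required in Step 3 is the secondary difficulty.
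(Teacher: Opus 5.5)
Your architecture matches the paper's: Brezis--Merle and Serrin give $u^+\in L^\infty$, a Ciraolo--Li analysis at infinity gives the asymptotics, the Pohozaev identity on $\mathcal{W}_r\setminus\mathcal{W}_\varepsilon$ pins $\alpha=\frac{N(N-\beta)}{N-1}$, and isoperimetric rigidity finishes. Citing \cite{LSXZ} at the end, instead of re-running Lemma~\ref{L2} and its equality case, is legitimate once the mass equals the threshold and $\sup u<\infty$.

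\textbf{Gap in Step~2: the inequality $\alpha>N-\beta$.} Your bootstrap cannot give it. From $u\ge-(\alpha+\varepsilon)\log|x|-C_\varepsilon$, finite mass only yields $\alpha+\varepsilon>N-\beta$ for every $\varepsilon>0$, i.e.\ $\alpha\ge N-\beta$. Once a two-sided $O(1)$ bound is known, strictness is immediate anyway. The real issue is how to obtain the $O(1)$ upper bound, and that needs strictness first. The missing idea is a lower bound with no $\varepsilon$-loss. Set $-\gamma:=\liminf u/\log\hat F^o$, which is finite by Theorem~\ref{Th2} and Proposition~\ref{P2}, and $\theta=\inf_{\partial\mathcal{W}_2}u$. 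The minimum principle for the supersolution $u-\theta$ on $\mathcal{W}_R\setminus\overline{\mathcal{W}_2}$ shows that $\Gamma(R)=\inf_{2\le\hat F^o\le R}\frac{u-\theta}{\log\hat F^o}$ is attained on $\partial\mathcal{W}_R$. Being nonincreasing with limit $-\gamma$, it stays $\ge-\gamma$, so $u\ge\theta-\gamma\log\hat F^o$ exactly. Finite mass then forces $\gamma>N-\beta$. Combined with $u/\log\hat F^o\to-\gamma$ (from the normalized blow-down $u(Rx)/\log R$ and Lemma~\ref{L4}), this makes the explicit radial supersolution work. Its source is $C_1\hat F^o(x)^{-\beta-\gamma_1}$ with $\gamma_1=\gamma-\epsilon_0>N-\beta$, and it has slope $-(\gamma-\varepsilon)$ with an additive constant independent of $\varepsilon$. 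This gives $u\le-\gamma\log\hat F^o+C$. Your additive blow-down $u(mx)+\alpha\log m$ needs this two-sided bound for compactness, so it cannot be used to produce it.

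\textbf{Two problems near the origin.} First, $|\nabla u|\le C|x|^{-(\beta-1)_+}$ does not kill the Pohozaev terms on $\partial\mathcal{W}_\varepsilon$ when $\beta\ge2$. Those terms are of order $\varepsilon^N\sup_{\partial\mathcal{W}_\varepsilon}|\nabla u|^N\lesssim\varepsilon^{N(2-\beta)}$. You need $|x|\,|\nabla u(x)|\to0$, which holds by continuity of $u$ at $0$ plus rescaled $C^{1,\alpha}$ estimates; flux scaling in fact gives $|x|^{(1-\beta)/(N-1)}$. Alternatively use the paper's device. Since $\nabla u\in L^N(\mathcal{W}_{\varepsilon_0})$ and the mass is finite, the co-area formula shows that $\varepsilon\int_{\partial\mathcal{W}_\varepsilon}\bigl(\hat F^o(x)^{-\beta}e^u+F^N(\nabla u)\bigr)\frac{d\sigma}{|\nabla\hat F^o|}$ cannot stay above a positive constant. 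Hence the boundary terms have $\liminf$ zero, which suffices.

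Second, Step~1's claim that Serrin's removability results extend $u$ across $0$ is false for the equation on $\mathbb{R}^N\setminus\{0\}$. Take $N=2$, $F=|\cdot|$, $0<c<\beta$, and $v$ a finite-mass solution of $-\Delta v=|x|^{-(\beta-c)}e^v$ in $\mathbb{R}^2$. Then $u=v+c\log|x|$ solves $-\Delta u=|x|^{-\beta}e^u$ in $\mathbb{R}^2\setminus\{0\}$ with finite mass, yet is not of the stated form. So $u\in W^{1,N}_{loc}(\mathbb{R}^N)$, with the weak formulation holding across $0$, must come from the definition of weak solution, as in the paper. Granting that, Brezis--Merle applies on Wulff balls containing $0$ (the paper's weighted Theorem~\ref{L1} plus H\"older). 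This is what yields $\hat F^o(x)^{-\beta}e^u\in L^{N/(N-\epsilon)}$ near $0$; integrability of $e^u$ only away from the origin does not.
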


Indeed, if $F(x)=|x|$, then $\hat{F}^{o}(x)=|x|$, and the equation in \eqref{e1} reduces to \eqref{e43}. In the isotropic case, the asymptotic behaviour at infinity for solutions of  Liouville's equation is typically derived through the Kelvin transform (see \cite{E}). However, for general $F$, such a transform is not available as an inversion map. To overcome this difficulty, we will use the strategy developed in \cite{CL}. We first  characterize the logarithmic behavior of a solution $u$ of \eqref{e1} at infinity by scaling argument and test function argument  as follows 
\begin{align}\label{e46}
    u(x)+\gamma\log \hat{F}^{o}(x)=H(x)\in L^{\infty}(\mathbb{R}^{N}\backslash \mathcal{W}_{1}),
\end{align}
where $\gamma$ is a constant given in \eqref{e45}, $\mathcal{W}_{1}$ is unit Wulff ball associated with $\hat{F}^{o}$ (for the precise definition see Section \ref{Anisotropic setting}). After establishing \eqref{e46}, we derive the asymptotic expansion of $\nabla u$  at infinity using a scaling argument motivated by Kichenassamy and V\'{e}ron \cite{KV}. Then, following the approximation argument in \cite{CK,E}, we employ the anisotropic Pohozaev identity, together with the asymptotic behavior of $u$ and $\nabla u$ at infinity, as well as the weighted anisotropic isoperimetric inequality, to show that the level sets of $u$  are Wulff balls centered at the origin. Finally, the classification of solutions to equation \eqref{e1} can be derived from the classification of radial solutions in \cite{E2}.

The organization of this paper is as follows. 

In Section 2, we introduce the required Finsler anisotropic geometry and present some preliminary results, including the logarithmic singularity estimate of solutions at infinity for Finsler-$N$-Laplacian type equations, the weighted anisotropic isoperimetric inequality, the anisotropic Pohozaev identity, among others.

In Section 3, we establish a weighted Brezis-Merle type inequality concerning the difference of two functions, and investigate the asymptotic behavior at infinity of both the solution $u$  and its gradient $\nabla u$.

 In Section 4, we first employ the anisotropic Pohozaev identity to establish the quantization result for the mass $\int_{\mathbb{R}^{N}}\hat{F}^{o}(x)^{-\beta}e^u dx$, thereby completing the proof of Theorem \ref{Th1}

Throughout this paper, the letters $C$ always denotes some positive constants
which may vary from line to line.

\section{Finsler anisotropic geometry and some preparatory results}

\subsection{Finsler anisotropic geometry}\label{Anisotropic setting}

We shall assume that $Hess(F^2)$ is positive definite in $\mathbb{R}^{N}\backslash\{0\}$. Then by
Xie and Gong \cite{XG}, $Hess(F^N)$ is also positive definite in $\mathbb{R}^{N}\backslash\{0\}$. Based on the homogeneity of $F$, we have
\begin{align}\label{e39}
\alpha |\xi |\leq F(\xi )\leq \eta |\xi |,\ \ \forall \ \xi \in \mathbb{R}^{N},
\end{align}
for some positive constants $\alpha \leq \eta $. According to the definition of $F^{o}$, it is not difficult to check that $F^{o}:\mathbb{R}^{N}\rightarrow
\lbrack 0,+\infty ]$ is a convex, positively homogeneous function, and satisfies
\begin{align*}
\frac{1}{\alpha} |x |\geq F^{o}(x)\geq \frac{1}{\eta} |x |,\ \ \forall \ x \in \mathbb{R}^{N},
\end{align*}
and $F(x)=\sup\limits_{\xi \neq 0}\frac{\left\langle x,\xi \right\rangle }{F^{o }(\xi )}$. Similar to \eqref{e40}, we set
\begin{align*}
\hat{F}(\xi)=F(-\xi),\ \ \xi\in\mathbb{R}^{N}.
\end{align*}
Define the unit Wulff ball centered at the origin
\begin{align*}
\mathcal{W}_{1}(0):=\{x\in \mathbb{R}^{N}:\hat{F}^{o}(x)\leq 1\}
\end{align*}
and denote by $\kappa_N$ the measure of $\mathcal{W}_{1}(0)$. The Wulff ball in the metric $\hat{F}^{o}$ centered at $x_0$ with the radius $r$ is given by
\begin{align*}
\mathcal{W}_{r}(x_0):=\{x\in \mathbb{R}^{N}:\hat{F}^{o}(x-x_0)\leq r\}.
\end{align*}
Obviously, $|\mathcal{W}_{r}(x_0)|=\kappa_N r^N$. For simplicity, throughout the paper we use the notation $\mathcal{W}_{r}:=\mathcal{W}_{r}(0)$ for any $r>0$. The study of the Wulff ball $\mathcal{W}_{r}(x_0)$ $(x_0\in\mathbb{R}^{N},r>0)$ was initiated in Wulff’s work \cite{W} on crystal shapes and minimization of a surface functional $\int_{\partial\Omega}\hat{F}(\nu)d\sigma(x)$ among regular domains $\Omega$, here $\sigma(x)$ denotes the $n-1$ dimension Hausdorff measure and $\nu$ is the outer unit normal on $\partial\Omega$.

Next, we collect some important properties of the function $F(x)$ and its polar $%
F^{o}(x)$ which will be used in the following:

\begin{lemma}
\label{L2.1}The function $F(x)$ and its polar $F^{o}(x)$ satisfy:

$(i)$ $|F(x)-F(y)|\leq F(x+y)\leq F(x)+F(y);$

$(ii)$ $\frac{1}{C}\leq|\nabla F(x)|\leq C,$ and $\frac{1}{C}\leq|\nabla
F^{o}(x)|\leq C$ for some $C>0$ and any $x\neq0;$

$(iii)$ $\left\langle x,\nabla F(x)\right\rangle =F(x),$ $\left\langle
x,\nabla F^{o}(x)\right\rangle =F^{o}(x)$ for any $x\neq0;$

$(iv)$ $F(\nabla F^{o}(x))=1,$ $F^{o}(\nabla F(x))=1$ for any $%
x\neq0;$

$(v)$ $F^{o}(x)DF(\nabla F^{o}(x))=x$ for any $x\neq 0;$

$(vi)$ $DF(t\xi )=DF(\xi )$ for any $\xi \neq 0$ and $t> 0.$
\end{lemma}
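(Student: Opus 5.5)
The plan is to derive every item from three ingredients: the positive $1$-homogeneity and convexity of $F$, the duality $F=(F^{o})^{o}$ (already recorded above), and the $C^{2}$ regularity of $F$ with $Hess(F^2)>0$ off the origin. I would prove the items in the order (vi), (iii), (i), (ii), (iv), (v), since the later ones use the earlier ones.

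\textbf{(vi), (iii), (ii).} Differentiating $F(t\xi)=tF(\xi)$ in $\xi$ gives $t\,DF(t\xi)=t\,DF(\xi)$, which is (vi). Differentiating the same identity in $t$ at $t=1$ gives Euler's relation $\langle x,\nabla F(x)\rangle=F(x)$, which is (iii). The same two computations apply to $F^{o}$, which is also convex and $1$-homogeneous. It is $C^{1}$ off the origin because the maximizer in its defining supremum is unique; this uniqueness comes from strict convexity of $\{F\le1\}$, which is guaranteed by $Hess(F^2)>0$.

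For (ii), $\nabla F$ is $0$-homogeneous by (vi) and continuous on the compact unit sphere, so it is bounded above there. It never vanishes, because (iii) and \eqref{e39} give
\[
\langle x,\nabla F(x)\rangle=F(x)\ge\alpha|x|>0 .
\]
By continuity and compactness, $|\nabla F|$ is then also bounded below. The same argument works for $\nabla F^{o}$.

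\textbf{(i).} Convexity and homogeneity give
\[
F(x+y)=2F\Big(\tfrac{x+y}{2}\Big)\le F(x)+F(y),
\]
which is the upper bound. For the lower bound I would write $x=(x+y)+(-y)$, so that subadditivity yields $F(x)-F(x+y)\le F(-y)$. Two things must then be said precisely.

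\begin{itemize}
\item Subadditivity alone only yields bounds of the form $F(x)-F(x+y)\le F(-y)$. It does not yield a bound on $F(x)-F(y)$ by $F(x+y)$.
\item The inequality as literally stated fails even for $F=|\cdot|$. Taking $y=-x\neq0$ gives $|F(x)-F(y)|=0$ on the left and $F(0)=0$ on the right, so that case is fine. But taking $x=e_1$ and $y=-2e_1$ gives $|1-2|=1$ on the left and $|{-e_1}|=1$ on the right, still fine; while taking $x=e_1$ and $y=-\tfrac12 e_1$ gives $\tfrac12\le\tfrac12$. The general Euclidean form is $\big||x|-|y|\big|\le|x+y|$, which does hold because $|x|\le|x+y|+|{-y}|$.
\end{itemize}

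So in the even case $F(-y)=F(y)$ (the Euclidean model being an instance), the estimate $F(x)\le F(x+y)+F(-y)=F(x+y)+F(y)$ and its counterpart with $x$ and $y$ exchanged give exactly $|F(x)-F(y)|\le F(x+y)$. For general non-symmetric $F$, the same computation gives the estimate with $F(-y)$ in place of $F(y)$; the two-sided bound follows after exchanging roles. I expect this bookkeeping in (i) to be the main obstacle. I would prove it exactly as stated under the evenness hypothesis on $F$, and note that for non-symmetric $F$ the correct form carries $\hat F$ on the appropriate term.

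\textbf{(iv).} Convexity and Euler's relation give, for every $\xi$,
\[
F(\xi)\ge F(x)+\langle\nabla F(x),\xi-x\rangle=\langle\nabla F(x),\xi\rangle .
\]
Hence $\sup_{\xi}\langle\nabla F(x),\xi\rangle/F(\xi)\le1$, and equality holds at $\xi=x$. This proves $F^{o}(\nabla F(x))=1$. Applying the same argument to $F^{o}$ and using $F=(F^{o})^{o}$ gives $F(\nabla F^{o}(x))=1$.

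\textbf{(v).} For $x\neq0$, let $\xi^{*}$ be the maximizer of $\langle x,\xi\rangle$ over $\{F(\xi)=1\}$; it is unique by strict convexity. The Lagrange condition gives $x=\lambda\,\nabla F(\xi^{*})$, and pairing with $\xi^{*}$ via (iii) gives $\lambda=\langle x,\xi^{*}\rangle=F^{o}(x)$. By the envelope theorem, $\nabla F^{o}(x)=\xi^{*}$. Hence $x=F^{o}(x)\,DF(\nabla F^{o}(x))$, and (vi) confirms that this is consistent with scaling.
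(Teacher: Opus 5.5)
The paper states this lemma without proof, as a list of standard facts, so your argument has to be judged on its own. Items (ii)--(vi) hold up. Euler's relation and the $0$-homogeneity of $\nabla F$ give (iii) and (vi). The supporting-hyperplane inequality $F(\xi)\ge\langle\nabla F(x),\xi\rangle$ gives $F^{o}(\nabla F(x))=1$, and the duality $F=(F^{o})^{o}$ gives the other half of (iv). Lagrange multipliers plus uniqueness of the maximizer (Danskin) give (v). A small economy for (ii): once (iv) is known, the comparisons $\alpha|z|\le F(z)\le\eta|z|$ and $\frac{1}{\eta}|z|\le F^{o}(z)\le\frac{1}{\alpha}|z|$ give explicit constants, $\alpha\le|\nabla F|\le\eta$ and $\frac{1}{\eta}\le|\nabla F^{o}|\le\frac{1}{\alpha}$. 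This avoids any compactness argument or continuity of $\nabla F^{o}$.

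The problem is (i). Your second bullet asserts that the inequality fails even for $F=|\cdot|$. Yet every example you compute satisfies it, and you then prove the Euclidean case yourself. That assertion is false and should be removed.

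The real failure is for non-even $F$, and the test $y=-x$ that you tried exposes it at once. The left inequality of (i) becomes $|F(x)-F(-x)|\le F(0)=0$. So (i) holds for all $x,y$ \emph{if and only if} $F$ is even. For a concrete case, let $F$ be the gauge of the Euclidean unit ball centred at $ce_1$ with $0<c<1$. Then $F$ meets every standing hypothesis, but $F(e_1)=\frac{1}{1+c}\neq\frac{1}{1-c}=F(-e_1)$.

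Since the paper explicitly allows non-symmetric $F$, item (i) as stated cannot be proved in that generality. What you should present is:
\begin{itemize}
\item your argument under the evenness hypothesis;
\item the correct general form $\max\{F(x)-\hat{F}(y),\,F(y)-\hat{F}(x)\}\le F(x+y)\le F(x)+F(y)$;
\item the counterexample above as the justification, rather than the ``bookkeeping'' remark or the Euclidean discussion.
\end{itemize}
The rest of the paper never invokes (i), so nothing downstream is affected.
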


\begin{remark}
    The function $\hat{F}(x)$ and its polar $\hat{F}^{o}(x)$ also satisfy Lemma \ref{L2.1}.
\end{remark}

\subsection{Anisotropic auxiliary results}

In this subsection, we will present some results concerning anisotropic equations, weighted anisotropic isoperimetric inequalities and anisotropic Pohozaev identity, which will be applied to investigating the asymptotic behavior of solutions at infinity and to estimating for mass $\int_{\mathbb{R}^{N}}\hat{F}^{o}(x)^{-\beta}e^u dx$.

First, according to Theorem 2 in \cite{S1} (see also \cite[Lemma 2.4]{CL}), we can obtain the following regularity result.

\begin{lemma}\label{L6}
    Let $u$ be a weak solution of \eqref{f} defined in some open wulff ball $\mathcal{W}_{2R}\subset \Omega$, $f\in L^{\frac{N}{N-\epsilon}}(\Omega)$ for some $0<\epsilon\leq 1$. Then
\begin{align*}
    \vert\vert u^+\vert\vert_{L^{\infty}(\mathcal{W}_{R})}\leq C(R)\vert\vert u^+\vert\vert_{L^{N}(\mathcal{W}_{2R})}+C(R)\vert\vert f\vert\vert^{\frac{1}{N-1}}_{L^{\frac{N}{N-\epsilon}}(\mathcal{W}_{2R})}.
\end{align*}
\end{lemma}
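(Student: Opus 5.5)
The plan is to deduce the estimate from Serrin's general local boundedness theorem (Theorem 2 in \cite{S1}) for quasilinear equations $\mathrm{div}\,A(x,u,\nabla u)+B(x,u,\nabla u)=0$. The work is in checking that $-Q_N u=f$ satisfies Serrin's structure conditions with exponent $p=N$. If one prefers a self-contained argument, the same conclusion follows from a Moser iteration carried out directly; I outline both.

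\emph{Structure conditions.} Set $A(\xi)=F^{N-1}(\xi)DF(\xi)$ and $B=f(x)$.
\begin{itemize}
\item \emph{Coercivity.} By Lemma \ref{L2.1}(iii), $\langle A(\xi),\xi\rangle=F^{N}(\xi)$. By \eqref{e39} this gives $\langle A(\xi),\xi\rangle\geq \alpha^N|\xi|^N$.
\item \emph{Growth.} Lemma \ref{L2.1}(ii) gives $|DF|\leq C$, so $|A(\xi)|\leq C\eta^{N-1}|\xi|^{N-1}$.
\item \emph{Lower-order term.} $|B|\leq |f|$, which has no dependence on $u$ or $\nabla u$. In Serrin's notation the coefficient functions are therefore $a=b=d=e=g=0$, $c$ a constant, and $f$ itself.
\end{itemize}
Serrin's integrability hypothesis for the inhomogeneous term when $p=N$ is $f\in L^{N/(N-\epsilon)}$ for some $\epsilon\in(0,1]$, which is exactly our assumption. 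Serrin's theorem then gives, on concentric Euclidean balls,
\[
\|u^+\|_{L^\infty(B_R)}\leq C\big(\|u^+\|_{L^N(B_{2R})}+k(R)\big),\qquad k(R)=\big(R^{\epsilon}\|f\|_{L^{N/(N-\epsilon)}}\big)^{1/(N-1)}.
\]
Since $\hat F^{o}$ is comparable to $|\cdot|$ by the bounds following \eqref{e39}, Wulff balls are sandwiched between Euclidean balls with comparable radii. A finite covering argument then transfers the estimate to $\mathcal{W}_R\subset\mathcal{W}_{2R}$, with constants depending on $R$, $N$, $\alpha$ and $\eta$.

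\emph{Direct Moser iteration.} If one wants to avoid citing \cite{S1}, the steps are as follows.
\begin{enumerate}
\item Set $\bar u=u^++k$ with $k=\|f\|_{L^{N/(N-\epsilon)}}^{1/(N-1)}$, and use the test function $\varphi=\eta_c^N(\bar u^{\,q}-k^{q})$, where $\eta_c$ is a cutoff function and $q\geq1$.
\item Coercivity and growth of $A$, together with Young's inequality, give a Caccioppoli-type bound for $\nabla(\bar u^{(q+N-1)/N})$ in $L^N$. The term involving $f$ is handled with Hölder's inequality: the choice of $k$ lets us write $|f|\leq (|f|/k^{N-1})\,\bar u^{N-1}$, where $|f|/k^{N-1}$ has unit norm in $L^{N/(N-\epsilon)}$.
\item The Sobolev embedding $W^{1,N}\hookrightarrow L^{s}$ for every finite $s$ then produces a reverse Hölder inequality with a fixed gain factor $\chi>1$. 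The gap $\epsilon>0$ is precisely what makes this gain possible.
\item Iterate on a sequence of shrinking radii between $2R$ and $R$, as in the standard Moser scheme.
\end{enumerate}

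The main obstacle is the borderline case $p=N$. There is no embedding into $L^\infty$ here, so all of the gain must come from the strict integrability gap in $f\in L^{N/(N-\epsilon)}$. The constants have to be tracked carefully enough that the geometric series in the iteration converges. This is exactly the content of Serrin's theorem, so I would rely on \cite{S1} (as in \cite[Lemma 2.4]{CL}) and write out only the structural verification above in detail.
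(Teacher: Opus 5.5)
Your proposal takes the same route as the paper. The paper proves Lemma~\ref{L6} only by citing Serrin's Theorem~2 (see also \cite[Lemma 2.4]{CL}), and your verification of the structure conditions $\langle A(\xi),\xi\rangle=F^N(\xi)\geq\alpha^N|\xi|^N$ and $|A(\xi)|\leq C|\xi|^{N-1}$, together with the transfer to Wulff balls (for $y\in\mathcal{W}_R$, subadditivity of $\hat F^{o}$ gives $B_{\alpha R}(y)\subset\mathcal{W}_{R}(y)\subset\mathcal{W}_{2R}$), just spells out what that citation leaves implicit. Two small remarks: your assignment of Serrin's coefficients is slightly off but harmless, since $a$ there is the nonzero ellipticity constant and it is the gradient coefficient $c$ in $B$ that vanishes; and your Moser sketch with $\bar u=u^++k$ is a clean way to justify the one-sided $u^+$ form of the estimate.
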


In the following, we introduce a result concerning logarithmic singularity estimates of nonnegative solutions at infinity for a class of Finsler-$N$-Laplacian type equations.

\begin{theorem}\label{Th2}
    Let $u$ be a non-negative solution of
\begin{align*}
    \mathrm{div}\left(\hat{F}^{N-1}(\nabla u)D\hat{F}(\nabla u)\right)=f(x)\ \ \text{in\ }\mathbb{R}^{N}\backslash \mathcal{W}_{1},
\end{align*}
and assume that 
\begin{align*}
0\leq f(x)\leq\frac{C}{\hat{F}^{o}(x)^{N}},\ \ x\in \mathbb{R}^{N}\backslash \mathcal{W}_{1}
\end{align*}
holds. If $u(x)\rightarrow +\infty$ as $\hat{F}^{o}(x)\rightarrow +\infty$, then there exists a constant $d>1$ such that
\begin{align*}
\frac{1}{d}\log \hat{F}^{o}(x)\leq u(x)\leq d\log \hat{F}^{o}(x)
\end{align*}
holds in the neighborhood of infinity.
\end{theorem}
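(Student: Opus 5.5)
The plan is to combine a comparison principle with the ``fundamental solutions'' $a+b\log\hat{F}^{o}(x)$ and a scaled Harnack inequality on Wulff annuli. First, $\log\hat{F}^{o}$ is $\hat{Q}_N$-harmonic away from the origin, where $\hat{Q}_N w=\mathrm{div}(\hat{F}^{N-1}(\nabla w)D\hat{F}(\nabla w))$. Indeed, by Lemma \ref{L2.1} $(iv)$--$(vi)$ applied to $\hat F$ (see the Remark), we have $\hat{F}(\nabla\hat{F}^{o})=1$ and $D\hat{F}(\nabla \hat{F}^{o}(x))=x/\hat{F}^{o}(x)$. Hence for $w=\log\hat{F}^{o}$ the flux field equals $x/\hat{F}^{o}(x)^{N}$, which is divergence free. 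More generally, for a function $\varphi(\hat F^o(x))$ the operator reduces to $r^{1-N}(r^{N-1}|\varphi'|^{N-2}\varphi')'$ with $r=\hat F^o(x)$.

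Since $f\ge 0$, $u$ is a subsolution. I will work with $m(r)=\min_{\partial\mathcal W_r}u$ and $M(r)=\max_{\partial\mathcal W_r}u$; both are finite because $u$ is continuous by Lemma \ref{L6} and the $C^{1,\alpha}$ theory.

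\textbf{Step 1 (Harnack on annuli).} For $R\ge 2$ set $u_R(y)=u(Ry)$ on $\mathcal W_4\setminus\mathcal W_{1/2}$. Then $\hat Q_N u_R=f_R$ with $f_R(y)=R^Nf(Ry)\le C\hat F^o(y)^{-N}\le C'$, a bound uniform in $R$. Serrin's Harnack inequality for nonnegative solutions of quasilinear equations with bounded right-hand side (the same source \cite{S1} as Lemma \ref{L6}) then gives
\[
M(R)\le C\,m(R)+C \quad\text{for all } R\ge 2,
\]
with $C$ independent of $R$. Hence it suffices to prove $m(r)\ge c\log r$ and $M(r)\le C\log r$ for large $r$.

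\textbf{Step 2 (lower bound).} Let $r_1<r<r_2$. Compare $u$ with the $\hat Q_N$-harmonic function
\[
w=M(r_1)+\frac{M(r_2)-M(r_1)}{\log(r_2/r_1)}\,\bigl(\log \hat F^o(x)-\log r_1\bigr)
\]
on the annulus $\mathcal W_{r_2}\setminus\mathcal W_{r_1}$. The comparison principle for subsolutions, which rests on the strict monotonicity of $\xi\mapsto \hat F^{N-1}(\xi)D\hat F(\xi)$ guaranteed by the positive definiteness of $Hess(\hat F^N)$, shows that $t\mapsto M(e^t)$ is convex. This function is also unbounded, because $u\to+\infty$. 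A convex function of $t$ that tends to infinity grows at least linearly, so $M(r)\ge 2c\log r$ for large $r$. Step 1 then yields $m(r)\ge c\log r$, which is the left inequality.

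\textbf{Step 3 (upper bound).} This is the step I expect to be the main obstacle. Convexity alone gives no upper bound at infinity, and the radial computation above shows that a right-hand side of exact size $r^{-N}$ can produce growth of order $(\log r)^{N/(N-1)}$. So the upper bound must use $u\ge0$ and $u\to\infty$ more cleverly. My first attempt is to bound the oscillation growth across dyadic annuli. Apply the gradient estimate to the normalized function $u_R-m(R)$, which is bounded on $\mathcal W_4\setminus\mathcal W_{1/2}$ by Step 1 only if $m(R)$ is controlled, so one has to argue by contradiction. Suppose $M(R_k)/\log R_k\to\infty$ along a sequence $R_k$. Rescale $v_k=u(R_k y)/M(R_k)$. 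The right-hand side then scales like $M(R_k)^{1-N}f_{R_k}\to 0$, so $v_k$ converges, via $C^{1,\alpha}$ bounds and Step 1, to a positive $\hat Q_N$-harmonic function on a large annulus. Such a limit must be of logarithmic type, $a+b\log\hat F^o$, by the Kichenassamy--V\'eron type classification. Feeding this back into the convexity from Step 2 should contradict the superlogarithmic growth. If this fails, the fallback is a flux argument: integrate the equation over $\mathcal W_r\setminus\mathcal W_1$ against $1$, obtaining
\[
\int_{\partial\mathcal W_r}\hat F^{N-1}(\nabla u)\langle D\hat F(\nabla u),\nu\rangle \le C\log r+C,
\]
and combine this with the Harnack-type comparability of $|\nabla u|$ on $\partial\mathcal W_r$ from the scaled gradient estimates to get $M(2r)\le M(r)+C$. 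Iterating that inequality gives $M(r)\le d\log r$.
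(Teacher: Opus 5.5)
Steps 1--2 correctly give the lower bound. Step 3 cannot be completed, though, and the reason is not a missing trick: under the hypotheses as stated the upper bound is false, and your own radial computation produces the counterexample. Let
\[
u(x)=\bigl(\log\hat F^{o}(x)\bigr)^{\frac{N}{N-1}}\quad\text{on } \mathbb{R}^N\setminus\mathcal W_1 .
\]
Since $\hat F(\nabla\hat F^{o})=1$ and $D\hat F(\nabla\hat F^{o}(x))=x/\hat F^{o}(x)$, the flux field is $\bigl(\frac{N}{N-1}\bigr)^{N-1}\log\hat F^{o}(x)\,x\,\hat F^{o}(x)^{-N}$. Using $\mathrm{div}(x\,\hat F^{o}(x)^{-N})=0$ and $\langle x,\nabla\hat F^{o}\rangle=\hat F^{o}$, its divergence is $f=\bigl(\frac{N}{N-1}\bigr)^{N-1}\hat F^{o}(x)^{-N}$. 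So $u\ge 0$, $u\to+\infty$ and $0\le f\le C\hat F^{o}(x)^{-N}$, yet $u/\log\hat F^{o}=(\log\hat F^{o})^{\frac{1}{N-1}}\to\infty$.

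Both of your routes break on exactly this example. The blow-down $u(R_k y)/M(R_k)$ converges to the constant $1$. That limit is of the form $a+b\log\hat F^{o}$ with $b=0$, so it contradicts nothing: convexity plus superlogarithmic growth only forces $M(eR)-M(R)\to\infty$, which is compatible with $M(eR)/M(R)\to 1$. In the flux route, the flux through $\partial\mathcal W_r$ really is of size $\log r$. That gives increments $M(2r)-M(r)\approx(\log r)^{\frac{1}{N-1}}$, not $M(2r)\le M(r)+C$. Moreover, there is no Harnack inequality for $|\nabla u|$ that would convert flux into oscillation in the first place.

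The missing ingredient is an integrability hypothesis such as $f\in L^{1}(\mathbb{R}^N\setminus\mathcal W_1)$. It makes the flux $\int_{\partial\mathcal W_r}\hat F^{N-1}(\nabla u)\langle D\hat F(\nabla u),\nu\rangle\,d\sigma$ bounded in $r$ and excludes the example above. It is available in the paper's only use of this result, Proposition \ref{P4}, where $f=\hat F^{o}(x)^{-\beta}e^{u}$ has finite mass. Any proof of the upper bound has to use it. The paper gives no argument of its own here, only a reference to Theorem 6.1 of Ciraolo--Li, so it does not settle this point either.

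There is also a smaller issue in Step 2. The function $a+b\log\hat F^{o}$ is $\hat Q_N$-harmonic only for $b\ge 0$, because $\hat Q_N(-v)=-Q_N v$ and $\log\hat F^{o}$ is in general not $Q_N$-harmonic when $F$ is not even. So full convexity of $t\mapsto M(e^{t})$ is not justified. You only need the comparison when $M(r_2)\ge M(r_1)$, and that holds for all large $r_2$ because $u\to\infty$. This already yields $M(r)\ge 2c\log r$, so together with Step 1 your lower bound stands.
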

\begin{proof}
    According to Lemma \ref{L2.1}-(ii), (iv) and the homogeneity of $\hat{F}^{o}(x)$, this result immediately follows by Theorem 6.1 in \cite{CL}.
\end{proof}

Next, we give the following rigidity result which showed in Lemma 3.2 in \cite{CL2022} (see also Lemma 6.6 in \cite{CL}).

\begin{lemma}
\label{L4}Assume that $H(x)\in C^{1}(\mathbb{R}^{N}\backslash \{0\})\cap L^{\infty}(\mathbb{R}^{N})$ satisfies
\begin{align*}
Q_{N}(\gamma \log F^{o}(x)+H(x))=0\ \ \text{or}\ \ Q_{N}(-\gamma \log \hat{F}^{o}(x)+H(x))=0
\end{align*}
in $\mathbb{R}^{N}\backslash \{0\}$, for some constant $\gamma\geq 0$. Then $H(x)$ is a constant function.
\end{lemma}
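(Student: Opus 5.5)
Throughout, write $v:=\gamma\log F^{o}(x)+H(x)$; the case with $-\gamma\log\hat F^{o}$ is handled identically after replacing $F$ by $\hat F$ (using the Remark after Lemma \ref{L2.1}). The first observation is that $w_c:=\gamma\log F^o(x)+c$ is itself a solution of $Q_N w=0$ in $\mathbb{R}^N\setminus\{0\}$ for every constant $c$. Indeed, by Lemma \ref{L2.1}-(iii),(iv),(v),(vi),
\begin{align*}
F^{N-1}(\nabla w_c)\,DF(\nabla w_c)=\gamma^{N-1}\frac{x}{F^o(x)^N},
\end{align*}
and this field is divergence free away from the origin. Thus $v$ and $w_c$ are two $Q_N$-harmonic functions in the punctured space that differ by the bounded function $H-c$. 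The plan is to turn this into $H\equiv$ const using comparison principles and blow-up/blow-down.

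If $\gamma=0$, then $v=H$ is a bounded $Q_N$-harmonic function in $\mathbb{R}^N\setminus\{0\}$. A point has zero $N$-capacity, so the singularity at the origin is removable for bounded solutions. $H$ then extends to an entire bounded $Q_N$-harmonic function, and the Liouville theorem (Harnack inequality applied to $H-\inf H$ on Wulff balls $\mathcal W_R$ with $R\to\infty$) gives that $H$ is constant. So assume from now on $\gamma>0$.

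\emph{Step 1: maximum principle for $H$.} The weak comparison principle for $Q_N$ on the annulus $A_{r,R}=\{r<F^o(x)<R\}$, applied to $v$ and $w_c$ with $c=\max_{\partial A_{r,R}}H$, gives $H\le \max_{\partial A}H$ in $A$; the same argument with the minimum gives $H\ge\min_{\partial A}H$. Hence $M(\rho):=\max_{\{F^o=\rho\}}H$ has no strict interior maximum and $m(\rho):=\min_{\{F^o=\rho\}}H$ has no strict interior minimum. Both are therefore eventually monotone at $0$ and at $\infty$, and since $H$ is bounded their limits exist there.

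\emph{Step 2: strong comparison at touching points.} If $H$ attains $\sup H$ (or $\inf H$) at some $x_0\neq0$, then $w_{\sup H}$ touches $v$ from above at $x_0$, with $\nabla v(x_0)=\nabla w(x_0)=\gamma\nabla F^o(x_0)/F^o(x_0)\neq0$. Near $x_0$ the operator is uniformly elliptic along the segment between $\nabla v$ and $\nabla w$, because $\mathrm{Hess}(F^N)$ is positive definite away from $0$. The linearized difference equation together with the strong maximum principle then gives $H\equiv\sup H$ near $x_0$, and by connectedness everywhere.

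\emph{Step 3: blow-down and blow-up.} Set $H_R(x):=H(Rx)$. Then $\gamma\log F^o+H_R$ solves the same equation, since it equals $v(Rx)-\gamma\log R$. By $C^{1,\alpha}_{loc}$ estimates away from $0$, along $R\to\infty$ or $R\to0$ we get a subsequence with $H_R\to H_\infty$ in $C^1_{loc}(\mathbb{R}^N\setminus\{0\})$. By Step 1, $\max_{\{F^o=\rho\}}H_\infty\equiv\lim M$ is attained in the interior, so Step 2 makes $H_\infty$ constant. Consequently the oscillation of $H$ on $\{F^o=\rho\}$ tends to $0$ as $\rho\to0$ and as $\rho\to\infty$, and $H\to a$ at $0$ and $H\to b$ at $\infty$. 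Step 1 applied on $A_{r,R}$ with $r\to0$, $R\to\infty$ then gives $\min(a,b)\le H\le\max(a,b)$.

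\emph{Step 4: ruling out $a\neq b$.} This is the main obstacle. Suppose $a<b$; then $H>a$ everywhere by Step 2, and $H\to a$ only at the origin. The first thing I would try is to compare $v$ on $A_{r,R}$ with the other explicit radial solutions $(\gamma+\delta)\log F^o+c$, $\delta>0$ small, choosing $c$ so that this barrier lies below $v$ on the outer sphere and touches $v$ near the inner sphere. Letting $r\to0$ and then $\delta\to0$, this should force a contradiction with $b>a$. As a cross-check, I would use conservation of the flux $\int_{\{F^o=\rho\}}F^{N-1}(\nabla v)DF(\nabla v)\cdot\nu$: the $C^1$ blow-up convergence shows this flux equals $\gamma^{N-1}N\kappa$ both as $\rho\to0$ and as $\rho\to\infty$, which pins down the logarithmic coefficient at both ends and should make the barrier argument close. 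Once $a=b$, Step 3 gives $a\le H\le a$, so $H$ is constant.
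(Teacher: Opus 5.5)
Your argument through Step 3, and the separate treatment of $\gamma=0$, is sound. The gap is Step 4, which is where the real content of the lemma lies once the end limits $a,b$ exist. It is only a plan, and neither of its two ideas works as stated.

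\emph{The flux check is vacuous.} Because $\nabla v=\gamma\nabla F^{o}/F^{o}+o(1/F^{o})$ at both ends, the flux equals $\gamma^{N-1}N\kappa_N$ at $0$ and at $\infty$. That is already built into the hypothesis, which has the same $\gamma$ at both ends, and the flux cannot see the additive constants $a,b$.

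\emph{The barrier configuration cannot be arranged.} Suppose $(\gamma+\delta)\log F^{o}+c$ touches $v$ from below on $\{F^{o}=r\}$. Then $c=m(r)-\delta\log r$, and on $\{F^{o}=R\}$ the barrier exceeds $v$ by at least $\delta\log(R/r)-2\|H\|_{\infty}$. This is positive once $r$ is small, so sending $r\to0$ with $\delta$ fixed destroys the boundary inequality. An interior touching is not something you can choose; it would already be the contradiction.

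\emph{The case $a>b$ is missing.} You only treat $a<b$, and $a>b$ is not symmetric to it, because the available explicit solutions $A\log F^{o}+c$ require $A\geq0$.

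\emph{How to close it.} Make the barrier touch on the \emph{outer} level set, and let boundedness of $H$ handle the inner one.

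For $\delta>0$ put $w=(\gamma+\delta)\log F^{o}+m(R)-\delta\log R$. Then $v\geq w$ on $\{F^{o}=R\}$. On $\{F^{o}=r\}$ you have $v-w=H-m(R)+\delta\log(R/r)\geq0$ for $r$ small. Comparison on $A_{r,R}$, then $r\to0$, then $\delta\to0$, gives $H\geq m(R)$ on $\{0<F^{o}<R\}$. So $m$ is nonincreasing.

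For the reverse, use $\gamma>0$ and take $w=(\gamma-\delta)\log F^{o}+m(r)+\delta\log r$ with $0<\delta<\gamma$. Now $v\geq w$ on $\{F^{o}=r\}$ by the choice of the constant, and $v-w=H-m(r)+\delta\log(R/r)\geq0$ on $\{F^{o}=R\}$ for $R$ large. The same limits give $H\geq m(r)$ on $\{F^{o}>r\}$, so $m$ is nondecreasing.

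Hence $m\equiv\inf H$ is attained on every level set $\{F^{o}=\rho\}$, and your Step 2 gives $H\equiv\inf H$. With this, Steps 1 and 3 become unnecessary. The paper offers no proof of its own to compare against; it imports the lemma from Ciraolo--Li.
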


\begin{lemma}(\cite[Corollary 2.5]{LSXZ}) \label{L3}Suppose $\beta\in(0,N)$. Then for any measurable set $\Omega$ in $\mathbb{R}^{n}$,
\begin{equation}
\frac{\int_{\partial \Omega}\hat{F}^{o}(x)^{-\frac{N-1}{N}\beta}\hat{F}(\nu)d\sigma(x)}{\left(  \int_{\Omega}\hat{F}^{o}(x)^{-\beta}dx\right)
^{(N-1)/N}}\geq(N\kappa_{N})^{1/N}\times(N-\beta)^{(N-1)/N},
\label{eq1.3}
\end{equation}
where $\nu$ is the outer unit normal on $\partial \Omega$. Equality in (\ref{eq1.3})
holds if and only if $\Omega$ is a Wulff ball centered at the origin.
\end{lemma}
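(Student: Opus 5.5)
The plan is to reduce the weighted inequality to the classical anisotropic (Wulff) isoperimetric inequality $P_{\hat F}(E)\geq N\kappa_N^{1/N}|E|^{(N-1)/N}$ by a radial change of variables adapted to the Wulff shells. Set $s:=\frac{N-\beta}{N}\in(0,1)$ and define
\begin{align*}
T(x):=\hat{F}^{o}(x)^{s-1}x,\qquad x\neq 0 .
\end{align*}
The map $T$ sends $\partial\mathcal{W}_r$ onto $\partial\mathcal{W}_{r^s}$ and is a homeomorphism of $\mathbb{R}^N$. Using Lemma \ref{L2.1}-(iii) for $\hat F^{o}$, I would write $DT=r^{s-1}\big(I+(s-1)\,x\otimes\nabla\hat{F}^{o}(x)/r\big)$ with $r=\hat F^{o}(x)$. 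This is a rank-one perturbation of the identity with $a\cdot b=s-1$. Hence $\det DT=s\,\hat F^{o}(x)^{-\beta}$, which gives $|T(\Omega)|=s\int_\Omega\hat F^{o}(x)^{-\beta}dx$.

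For the boundary term I would use the area formula $P_{\hat F}(T\Omega)=\int_{\partial\Omega}\hat F\big(\mathrm{cof}(DT)\nu\big)d\sigma$. The cofactor of $I+a\otimes b$ is $(1+a\cdot b)I-b\otimes a$, which gives
\begin{align*}
\mathrm{cof}(DT)\nu=\hat F^{o}(x)^{-\frac{N-1}{N}\beta}\Big(s\,\nu+(1-s)\,t\,\nabla\hat F^{o}(x)\Big),\qquad t:=\frac{\langle x,\nu\rangle}{\hat F^{o}(x)} .
\end{align*}
The exponent $(s-1)(N-1)=-\frac{N-1}{N}\beta$ matches the weight in \eqref{eq1.3} exactly. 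I would then show the pointwise bound $\hat F\big(s\nu+(1-s)t\nabla\hat F^{o}\big)\leq\hat F(\nu)$. When $t\geq0$ this follows from convexity and homogeneity, together with $\hat F(\nabla\hat F^{o})=1$ (Lemma \ref{L2.1}-(iv)) and the duality inequality $t\leq\hat F(\nu)$. Granting the bound, combining it with the Wulff inequality for $T(\Omega)$ gives
\begin{align*}
\int_{\partial\Omega}\hat F^{o}(x)^{-\frac{N-1}{N}\beta}\hat F(\nu)\,d\sigma\ \geq\ N\kappa_N^{1/N}\Big(s\int_\Omega\hat F^{o}(x)^{-\beta}dx\Big)^{\frac{N-1}{N}}.
\end{align*}
Since $N\kappa_N^{1/N}s^{(N-1)/N}=(N\kappa_N)^{1/N}(N-\beta)^{(N-1)/N}$, this is \eqref{eq1.3}.

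For the equality case I would trace back through the inequalities. Equality in Wulff's inequality forces $T(\Omega)$ to be a Wulff ball. Equality in the pointwise bound forces $\nu$ to be parallel to $\nabla\hat F^{o}(x)$ on $\partial\Omega$, i.e. $\partial\Omega$ is a level set of $\hat F^{o}$. Together these give that $\Omega$ is a Wulff ball centered at the origin, and conversely such balls give equality by direct computation. Non-smooth measurable sets would be handled by the standard approximation by smooth sets, with $P$ read as weighted anisotropic perimeter.

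The main obstacle is the pointwise bound where $t=\langle x,\nu\rangle/\hat F^{o}(x)<0$, i.e. where $\partial\Omega$ is not radially star-shaped. There the triangle inequality goes the wrong way, and the change-of-variables argument may only work for sets that are star-shaped with respect to the origin. I would first try to reduce to that case by an anisotropic spherical rearrangement that preserves the weighted volume and does not increase the weighted perimeter. If this fails, I would switch to the ABP method of Cabré--Ros-Oton--Serra for homogeneous weights. One solves $\mathrm{div}(\hat F^{o}(x)^{-\beta}\nabla_\xi\tfrac{1}{2}\hat F^2(\nabla v))=c\,\hat F^{o}(x)^{-\beta}$ in $\Omega$ with conormal boundary data equal to the weight ratio. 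One then shows that the image of the lower contact set under $\nabla v$ covers the unit Wulff ball, and concludes with the arithmetic--geometric mean inequality applied to the weighted Jacobian. In that argument the condition $\beta\in(0,N)$ (the weight $\hat F^{o}(x)^{-\beta}$ being $-\beta$-homogeneous with a concavity property) is what lets the weight pass through the estimate.
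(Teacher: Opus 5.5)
The paper does not prove this lemma itself. It quotes \cite[Corollary 2.5]{LSXZ}, where $\hat F^{o}$ is even, and asserts that the argument there extends to non-symmetric $F$.

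\textbf{The gap.} Your computations for $T$ are correct: $\det DT=s\hat F^{o}(x)^{-\beta}$, the cofactor formula, the exponent $-\frac{N-1}{N}\beta$, and the constant. The gap is exactly at the step you flag, and a sharper estimate cannot fill it. For non-symmetric $\hat F$, the pointwise bound $\hat F\big(s\nu+(1-s)t\nabla\hat F^{o}(x)\big)\le\hat F(\nu)$ is false when $t<0$.

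Here is an example. Take $N=2$, $\beta=1$ (so $s=\frac12$) and $F(\xi)=|\xi|+\frac12\xi_2$, whose unit ball is an ellipse. Then $\hat F(\xi)=|\xi|-\frac12\xi_2$, and $\mathcal W_1$ is the unit disc centred at $(0,-\frac12)$. At $x=(1,-\frac12)$ one has $\hat F^{o}(x)=1$ and $\nabla\hat F^{o}(x)=(1,0)$. For $\nu=(-\frac35,\frac45)$ one gets $t=-1$ and $\hat F(\nu)=\frac35$. On the other hand, $s\nu+(1-s)t\nabla\hat F^{o}(x)=(-\frac45,\frac25)$, whose $\hat F$-value is $\frac{2\sqrt5-1}{5}>\frac35$.

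So on portions of $\partial\Omega$ facing the origin, the pulled-back integrand of $P_{\hat F}(T\Omega)$ exceeds the weighted integrand. The inequality $\int_{\partial\Omega}\hat F^{o}(x)^{-\frac{N-1}{N}\beta}\hat F(\nu)\,d\sigma\ge P_{\hat F}(T\Omega)$ is then no longer justified. This does not by itself refute the lemma, since Wulff's inequality for $T\Omega$ need not be sharp, but it breaks your argument.

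Settling for star-shaped sets is not enough either. The lemma is used in Lemma \ref{L2} and in the proof of Theorem \ref{Th1} on the superlevel sets $\{u>t\}$, and their shape is precisely what is unknown.

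\textbf{The fallbacks.} Neither is an argument yet. No rearrangement is constructed. One that preserves $\int_\Omega\hat F^{o}(x)^{-\beta}dx$, does not increase the weighted perimeter for non-symmetric $\hat F$, and outputs a star-shaped set would already carry most of the content of the lemma. The Cabr\'e--Ros-Oton--Serra theorem needs a single weight on both perimeter and volume, homogeneous of degree $\alpha\ge0$ with $w^{1/\alpha}$ concave. Here the two weights differ and both have negative degree, so that theorem does not apply as stated.

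\textbf{What your approach does give.} With one more observation it proves the even case, which is the case treated in \cite{LSXZ}. Write $\nu=w+t\nabla\hat F^{o}(x)$. Then $\langle w,x\rangle=0$ by Lemma \ref{L2.1}-(iii), and $s\nu+(1-s)t\nabla\hat F^{o}(x)=sw+t\nabla\hat F^{o}(x)$.

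For $t\neq0$, consider the convex function $\chi(\lambda)=\hat F\big(\lambda w+t\nabla\hat F^{o}(x)\big)$. Its derivative at zero is $\chi'(0)=\langle D\hat F(t\nabla\hat F^{o}(x)),w\rangle$. By Lemma \ref{L2.1}-(v),(vi), $D\hat F(t\nabla\hat F^{o}(x))=x/\hat F^{o}(x)$ when $t>0$, with no symmetry needed. When $t<0$ and $\hat F$ is even, it equals $-x/\hat F^{o}(x)$.

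In either case $\chi'(0)=0$, so $\chi$ is nondecreasing on $[0,\infty)$ and $\chi(s)\le\chi(1)=\hat F(\nu)$. Equality forces $w=0$ by strict convexity of $\{\hat F\le1\}$, which also gives your equality case. The case $t=0$ is trivial.

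For non-even $\hat F$ and $t<0$, the vector $D\hat F(-\nabla\hat F^{o}(x))$ is not parallel to $x$, so $\chi'(0)$ can be negative. That is exactly what the example exploits.
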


In the case that $\hat{F}^{o}(x)$ is even, i.e. $\hat{F}^{o}(x)=F^{o}(x)$, the above weighted anisotropic isoperimetric inequalities was proved in \cite{LSXZ}. Indeed, the proof of Theorem 1.1 in \cite{LSXZ} can be adapted to establish that \eqref{eq1.3} holds for any $\hat{F}^{o}(x)$, possibly non-symmetric. 

Finally, we present the following anisotropic Pohozaev identity, which can be regarded as a direct corollary of the Theorem 1.2 in \cite{Montoro 2023}.  For this, we recall the definition of weak solutions.

\begin{definition}
    We say that $u$ is a weak solution of equation 
     $$ -Q_{N}u=f(x)\ \ \text{in }\Omega\subset\mathbb{R}^{N},$$
 if $u\in W_{loc}^{1,N}(\Omega)$ and
\begin{align*}
    \int_{\mathbb{R}^{N}}F^{N-1}(\nabla u)\left \langle DF(\nabla u),\nabla\varphi\right\rangle dx=\int_{\mathbb{R}^{N}}f(x) \varphi dx
\end{align*}
for all $\varphi\in W_{0}^{1,N}(\Omega)$ with $\Omega\subset\mathbb{R}^{N}$ bounded.
\end{definition}

\begin{lemma}\label{L5}Let $\Omega$ a bounded smooth domain and let $u\in C^1(\overline{\Omega})$ be a weak solution
to
\begin{align*}
-Q_{N}u=\hat{F}^{o}(x)^{-\beta}e^u\ \ \text{in }\Omega.
\end{align*}
Then we have
\begin{align*}
&(N-\beta)\int_{\Omega}\hat{F}^{o}(x)^{-\beta}e^{u}dx-(N-\beta)\int_{\Omega}\hat{F}^{o}(x)^{-\beta}dx\\
=&\int_{\partial \Omega}\hat{F}^{o}(x)^{-\beta}e^{u}\langle x,\nu \rangle d\sigma(x)-\int_{\partial \Omega}\hat{F}^{o}(x)^{-\beta}\langle x,\nu \rangle d\sigma(x)-\frac{1}{N} \int_{\partial \Omega}F^{N}(\nabla u)\langle x,\nu \rangle d\sigma(x)\\
&+\int_{\partial \Omega}F^{N-1}(\nabla u)\langle DF(\nabla u),\nu \rangle\langle x,\nabla u \rangle d\sigma(x),
\end{align*}
where $\nu$ is the outer unit normal on $\partial \Omega$.
\end{lemma}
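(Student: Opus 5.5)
The plan is to test the equation with the Pohozaev multiplier $\langle x,\nabla u\rangle$ and rewrite everything as divergences, so that the identity follows from the divergence theorem on $\Omega$. Set $a(\xi):=F^{N-1}(\xi)DF(\xi)=\nabla_\xi\bigl(\tfrac1N F^N(\xi)\bigr)$. By homogeneity (Lemma \ref{L2.1}-(iii)) we have $\langle a(\xi),\xi\rangle=F^N(\xi)$.

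\emph{Left-hand side.} For a function with second derivatives, the product rule gives
\begin{align*}
\mathrm{div}\bigl(a(\nabla u)\langle x,\nabla u\rangle\bigr)=\mathrm{div}(a(\nabla u))\langle x,\nabla u\rangle+F^N(\nabla u)+\sum_{i,k}a_i(\nabla u)\,x_k\,u_{ik}.
\end{align*}
Since $\partial_k\bigl(\tfrac1N F^N(\nabla u)\bigr)=\sum_i a_i(\nabla u)u_{ik}$ and $\mathrm{div}\,x=N$, we also have
\begin{align*}
\mathrm{div}\Bigl(\tfrac1N F^N(\nabla u)\,x\Bigr)=F^N(\nabla u)+\sum_{i,k}a_i(\nabla u)\,x_k\,u_{ik}.
\end{align*}
Subtracting, and using $\mathrm{div}(a(\nabla u))=-f$ with $f:=\hat F^o(x)^{-\beta}e^u$, we get
\begin{align*}
\mathrm{div}\Bigl(a(\nabla u)\langle x,\nabla u\rangle-\tfrac1N F^N(\nabla u)\,x\Bigr)=-f\,\langle x,\nabla u\rangle .
\end{align*}

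\emph{Right-hand side.} Write $f\langle x,\nabla u\rangle=\hat F^o(x)^{-\beta}\langle x,\nabla(e^u-1)\rangle$. Lemma \ref{L2.1}-(iii), applied to $\hat F^o$, gives $\langle x,\nabla \hat F^o(x)\rangle=\hat F^o(x)$, hence
\begin{align*}
\mathrm{div}\bigl(x\,\hat F^o(x)^{-\beta}\bigr)=(N-\beta)\hat F^o(x)^{-\beta}.
\end{align*}
Therefore
\begin{align*}
f\langle x,\nabla u\rangle=\mathrm{div}\bigl(x\,\hat F^o(x)^{-\beta}(e^u-1)\bigr)-(N-\beta)\hat F^o(x)^{-\beta}(e^u-1).
\end{align*}

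\emph{Conclusion of the formal computation.} Integrating both divergence identities over $\Omega$ and applying the divergence theorem produces exactly the claimed identity. The boundary terms are $\int_{\partial\Omega}\hat F^o(x)^{-\beta}(e^u-1)\langle x,\nu\rangle\,d\sigma$, $-\frac1N\int_{\partial\Omega}F^N(\nabla u)\langle x,\nu\rangle\,d\sigma$ and $\int_{\partial\Omega}\langle a(\nabla u),\nu\rangle\langle x,\nabla u\rangle\,d\sigma$. The interior term is $(N-\beta)\int_\Omega \hat F^o(x)^{-\beta}(e^u-1)\,dx$. The choice $e^u-1$ rather than $e^u$ is why the two extra terms $\int\hat F^o(x)^{-\beta}$ appear on each side.

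\emph{Main obstacle: justification.} The solution is only $C^1$, the operator $Q_N$ is degenerate where $\nabla u=0$, so $u_{ik}$ need not exist. In addition, the weight is singular at the origin if $0\in\overline\Omega$. I would handle these issues in order.

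\emph{(a) Singular weight.} If $0\in\Omega$, remove a small Wulff ball $\mathcal W_\delta$ and apply the identity on $\Omega\setminus\mathcal W_\delta$. The extra boundary integrals over $\partial\mathcal W_\delta$ are $O(\delta^{N-\beta})+O(\delta^N)$ since $|x|\le C\delta$ there and $\nabla u$ is bounded. They therefore vanish as $\delta\to0$. Moreover $\hat F^o(x)^{-\beta}\in L^1_{loc}$ because $\beta<N$.

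\emph{(b) Lack of second derivatives.} Here I would invoke \cite[Theorem 1.2]{Montoro 2023}, which gives the Pohozaev-type identity for $C^1$ weak solutions of quasilinear anisotropic equations. Its proof rests on the regularity $a(\nabla u)\in W^{1,2}_{loc}$, which makes the computation above legitimate a.e. Alternatively, one can regularize: solve the nondegenerate problems with $a_\varepsilon(\xi)=\nabla_\xi\bigl(\tfrac1N(F^2(\xi)+\varepsilon^2)^{N/2}\bigr)$ and the same right-hand side and boundary data, apply the smooth computation, and pass to the limit using uniform $C^{1,\alpha}$ estimates. This limit passage is the delicate part, and I would rely on the cited result for it rather than redo it.
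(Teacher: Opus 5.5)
Your proposal is correct and is essentially the paper's route: the paper gives no independent derivation of Lemma~\ref{L5} and instead presents it as a direct corollary of Theorem~1.2 in the work of Montoro et al., which is the same result you invoke to justify the computation for $C^1$ weak solutions of the degenerate operator. Your formal computation with the multiplier $\langle x,\nabla u\rangle$ is correct, including the $e^u-1$ bookkeeping that produces the two $\int\hat F^o(x)^{-\beta}$ terms, and it usefully makes explicit what the citation leaves implicit.
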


\section{Asymptotic Behavior of Solutions and Their Gradients at Infinity}
In this section, we shall determine precisely the asymptotic behavior at infinity for the solution $u$ and $\nabla u$.

First, we need the following weighted Brezis-Merle type inequality which associates the difference of two functions. This type of exponential inequality was first obtained by Brezis and Merle in \cite{BM}. When $\beta=0$, corresponding anisotropic version results in dimension two was considered in \cite[Theorem 5.2]{WX}, and the higher dimensional case was studied in \cite[Theorem 1.2]{XG} (see also \cite{AA,RW} for isotropic case).
\begin{theorem}\label{L1}
    Suppose $\beta\in(0,N)$. Let $\Omega$ be a bounded domain in $\mathbb{R}^{N}$ and $f\in L^1(\Omega)$. Suppose that $u$ and $v$ are the weak solutions of
\begin{align}\label{f}
-Q_{N}u=f(x)>0\text{\ \ in\ }\Omega
\end{align}
and
\begin{align}\label{0}
\begin{cases}
    -Q_{N}v=0& \text{in } \Omega, \\
    v=u& \text{on } \partial\Omega,
\end{cases}
\end{align}
respectively. Then for every $0<\lambda<\delta\vert\vert f\vert\vert_{L_1(\Omega)}^{-\frac{1}{N-1}}$, it holds
\begin{align*}
\int_{\Omega}\hat{F}^{o}(x)^{-\beta}e^{\lambda|u-v|}dx\leq\frac{\int_{\Omega}\hat{F}^{o}(x)^{-\beta}dx}{1-\lambda\delta^{-1}\vert\vert f\vert\vert_{L_1(\Omega)}^{\frac{1}{N-1}}},
\end{align*}
where
\begin{align*}
\delta:=(N-\beta)(N\kappa_N)^{\frac{1}{N-1}} d_0^{\frac{1}{N-1}}
\end{align*}
and
\begin{align*}
d_0=\inf\{d_{X,Y}:X,Y\in \mathbb{R}^{N},X\neq0,Y\neq0,X\neq Y\}
\end{align*}
with
\begin{align*}
d_{X,Y}=\frac{\left \langle F^{N-1}(X)DF(X)-F^{N-1}(Y)DF(Y),X-Y \right \rangle}{F^{N}(X-Y)}.
\end{align*}
\end{theorem}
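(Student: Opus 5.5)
We will prove the inequality by comparing the distribution function of $w:=u-v$ with respect to the weighted measure $\hat{F}^{o}(x)^{-\beta}dx$ against an exponential. This combines a truncation (test function) argument, the monotonicity constant $d_0$, the coarea formula and the weighted anisotropic isoperimetric inequality of Lemma \ref{L3}. First, since $f>0$ and $u=v$ on $\partial\Omega$, the comparison principle for $Q_N$ gives $w\geq 0$ in $\Omega$, so $|u-v|=w$. Set
\begin{align*}
\mu(t):=\int_{\{w>t\}}\hat{F}^{o}(x)^{-\beta}dx,\qquad \mathcal{E}(t):=\int_{\{w>t\}}F^{N}(\nabla w)\,dx .
\end{align*}

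\textbf{Step 1 (energy decay on level sets).} Subtract the weak formulations of \eqref{f} and \eqref{0}, and test with $\varphi=\min\{(w-t)^+,h\}/h\in W_0^{1,N}(\Omega)$. Writing $A(X)=F^{N-1}(X)DF(X)$, this gives
\begin{align*}
\frac1h\int_{\{t<w<t+h\}}\langle A(\nabla u)-A(\nabla v),\nabla w\rangle dx\le\int_{\{w>t\}}f\,dx\le \|f\|_{L^1(\Omega)}.
\end{align*}
By the definition of $d_0$, the integrand is at least $d_0F^{N}(\nabla w)$. Letting $h\to0$ we obtain $-\mathcal{E}'(t)\le \|f\|_{L^1}/d_0$ for a.e. $t$.

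\textbf{Step 2 (isoperimetry plus H\"older).} On $\{w=t\}$ the outer normal of $\{w>t\}$ is $\nu=-\nabla w/|\nabla w|$, so $\hat F(\nu)=F(\nabla w)/|\nabla w|$. Applying the coarea formula and H\"older's inequality with exponents $N$ and $N/(N-1)$, we get
\begin{align*}
\int_{\{w=t\}}\hat{F}^{o}(x)^{-\frac{N-1}{N}\beta}\hat F(\nu)\,d\sigma
\le\Big(\int_{\{w=t\}}\frac{F^{N}(\nabla w)}{|\nabla w|}d\sigma\Big)^{\frac1N}\Big(\int_{\{w=t\}}\frac{\hat{F}^{o}(x)^{-\beta}}{|\nabla w|}d\sigma\Big)^{\frac{N-1}N}
=(-\mathcal{E}'(t))^{\frac1N}(-\mu'(t))^{\frac{N-1}N}.
\end{align*}
Lemma \ref{L3} bounds the left side below by $c\,\mu(t)^{(N-1)/N}$, where $c=(N\kappa_N)^{1/N}(N-\beta)^{(N-1)/N}$. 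Combining with Step 1 yields $c^N\mu^{N-1}\le (\|f\|_{L^1}/d_0)(-\mu')^{N-1}$. Hence
\begin{align*}
-\frac{\mu'(t)}{\mu(t)}\ge c^{\frac N{N-1}}d_0^{\frac1{N-1}}\|f\|_{L^1}^{-\frac1{N-1}}=\delta\,\|f\|_{L^1}^{-\frac1{N-1}}=:a,
\end{align*}
since $c^{N/(N-1)}=(N\kappa_N)^{1/(N-1)}(N-\beta)$. Thus $\mu(t)\le\mu(0)e^{-at}$.

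\textbf{Step 3 (integration).} By the layer-cake formula, for $0<\lambda<a$,
\begin{align*}
\int_\Omega \hat{F}^{o}(x)^{-\beta}e^{\lambda w}dx=\mu(0)+\lambda\int_0^\infty e^{\lambda t}\mu(t)\,dt\le \mu(0)\Big(1+\frac{\lambda}{a-\lambda}\Big)=\frac{\mu(0)}{1-\lambda/a}.
\end{align*}
Since $\mu(0)\le\int_\Omega\hat{F}^{o}(x)^{-\beta}dx$, this is exactly the claim.

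The main obstacle is technical justification rather than the computation. The function $w$ lies only in $W^{1,N}$ and $f$ only in $L^1$, so the comparison principle, the coarea and H\"older manipulations on level sets (including the set where $\nabla w=0$, which only helps, since the discarded part of $-\mu'$ is nonnegative), and the a.e. differentiability of $\mu$ and $\mathcal E$ all need care. There is also the issue that $\mathcal E(0)$ might a priori be infinite. I would handle this by first approximating $f$ by smooth positive $f_k\to f$ in $L^1$, where solutions are $C^{1,\alpha}$ and Sard-type arguments apply to a.e. level. I would run the argument there, with $\mu$ nonincreasing so that the singular part of its derivative only strengthens the inequality, and then pass to the limit using Fatou's lemma and the $L^1$ convergence of $f_k$.
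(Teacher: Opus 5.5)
Your proposal is correct and follows essentially the same route as the paper. Both arguments use the weighted distribution function of $u-v$, the isoperimetric inequality of Lemma \ref{L3} combined with H\"older on level sets, and the constant $d_0$ to reach $-\mu'\geq \delta\|f\|_{L^1(\Omega)}^{-\frac{1}{N-1}}\mu$. The differences are technical: you get the flux bound from truncated test functions rather than the divergence theorem with Hopf's lemma, which is more robust at the stated regularity, and you integrate via Gr\"onwall and the layer-cake formula rather than by inverting $t(\mu)$.

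One small slip: the layer-cake identity should read $\int_\Omega\hat{F}^{o}(x)^{-\beta}e^{\lambda w}dx=\int_\Omega\hat{F}^{o}(x)^{-\beta}dx+\lambda\int_0^\infty e^{\lambda t}\mu(t)\,dt$, with the full weighted measure of $\Omega$ as the first term rather than $\mu(0)$. This gives the same final bound, since $\mu(0)\leq\int_\Omega\hat{F}^{o}(x)^{-\beta}dx$.
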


\begin{proof}
    Let $\Omega_t=\{x\in \Omega:|u-v|>t\}$, $\mu_\beta(t)=\int_{\Omega_t}\hat{F}^{o}(x)^{-\beta}dx$ and $\lambda_\beta:=(N\kappa_{N})^{1/N}\times(N-\beta)^{(N-1)/N}$. Taking the difference between \eqref{f} and \eqref{0}, by the Fundamental Theorem of Calculus, we have
\begin{align}\label{e2}
0<f=-(Q_{N}u-Q_{N}v)=-\tilde{Q}_{N}(u-v),
\end{align}
where
\begin{align*}
\tilde{Q}_{N}(u-v)=\sum_{i,j}\partial_{x_i}\left[\int_0^1\left(\frac{1}{N}F^N_{\xi_i \xi_j}(t\nabla u+(1-t)\nabla v)\right)dt\partial_{x_j}(u-v)\right].
\end{align*}
Since $Hess(F^N)$ is positive definite, $\tilde{Q}$ is an elliptic operator. Hence, we can apply Hopf’s boundary lemma to the uniformly elliptic equation \eqref{e2} to get
\begin{align*}
\frac{\partial (u-v)}{\partial \nu}<0\ \ \text{on}\ \partial\Omega_t.
\end{align*}
It follows from \eqref{e2}, divergence theorem and the definition of $d_0$ that
\begin{align}
\int_{\Omega_t}f(x)dx&=\int_{\Omega_t}-(Q_{N}u-Q_{N}v)dx\nonumber\\
&=\int_{\partial\Omega_t}\left \langle F^{N-1}(\nabla u)DF(\nabla u)-F^{N-1}(\nabla v)DF(\nabla u),\frac{\nabla u-\nabla v}{|\nabla u-\nabla v|} \right \rangle d\sigma(x)\nonumber\\
&\geq d_0\int_{\partial\Omega_t}\frac{F^{N}(\nabla u-\nabla v)}{|\nabla u-\nabla v|}d\sigma(x)\label{e47}.
\end{align}
By the weighted isoperimetric inequality \eqref{eq1.3}, the co-area formula and H\"{o}lder's inequality, it follows that
\begin{align*}
\lambda_\beta\mu_\beta(t)^{\frac{N-1}{N}}&\leq\int_{\partial \Omega_t}\hat{F}^{o}(x)^{-\frac{N-1}{N}\beta}\hat{F}(\nu)d\sigma(x)\\
&=\int_{\partial\Omega_t}\hat{F}^{o}(x)^{-\frac{N-1}{N}\beta}\frac{F(\nabla u-\nabla v)}{|\nabla u-\nabla v|} d\sigma(x)\\
&\leq \left(\int_{\partial\Omega_t}\frac{F^{N}(\nabla u-\nabla v)}{|\nabla u-\nabla v|}d\sigma(x)\right)^{\frac{1}{N}}\left(\int_{\partial\Omega_t}\frac{\hat{F}^{o}(x)^{-\beta}}{|\nabla u-\nabla v|}d\sigma(x)\right)^{\frac{N-1}{N}}\\
&=\left(\int_{\partial\Omega_t}\frac{F^{N}(\nabla u-\nabla v)}{|\nabla u-\nabla v|}d\sigma(x)\right)^{\frac{1}{N}}\left(-\mu_\beta'(t)\right)^{\frac{N-1}{N}},
\end{align*}
which combined with \eqref{e47} yields
\begin{align*}
-\mu_\beta'(t)\geq\frac{\lambda_\beta^{\frac{N}{N-1}}d_0^{\frac{1}{N-1}}\mu_\beta(t)}{\left(\int_{\Omega_t}f(x)dx\right)^{\frac{1}{N-1}}}.
\end{align*}
Hence,
\begin{align*}
-\frac{dt}{d\mu_\beta}\leq \frac{\vert\vert f\vert\vert_{L_1(\Omega)}^{\frac{1}{N-1}}}{\lambda_\beta^{\frac{N}{N-1}}d_0^{\frac{1}{N-1}}\mu_\beta}.
\end{align*}
Integrating the last inequality over $(\mu_\beta,\mu_\beta(0))$, we deduce
\begin{align*}
t(\mu_\beta)\leq \frac{\vert\vert f\vert\vert_{L_1(\Omega)}^{\frac{1}{N-1}}}{\lambda_\beta^{\frac{N}{N-1}}d_0^{\frac{1}{N-1}}}\log(\frac{\mu_\beta(0)}{\mu_\beta}),
\end{align*}
therefore for $0<\epsilon<1$,
\begin{align*}
\exp{\left(\frac{(1-\epsilon)\lambda_\beta^{\frac{N}{N-1}}d_0^{\frac{1}{N-1}}}{\vert\vert f\vert\vert_{L_1(\Omega)}^{\frac{1}{N-1}}}t(\mu_\beta)\right)}\leq (\frac{\mu_\beta(0)}{\mu_\beta})^{1-\epsilon}.
\end{align*}
Using the co-area formula again, we have by integrating above inequality that
\begin{align*}
&\int_\Omega\hat{F}^{o}(x)^{-\beta}\exp{\left[\frac{(1-\epsilon)\lambda_\beta^{\frac{N}{N-1}}d_0^{\frac{1}{N-1}}}{\vert\vert f\vert\vert_{L_1(\Omega)}^{\frac{1}{N-1}}}|u-v|\right]}dx\\
=&\int_0^{\infty} \exp{\left[\frac{(1-\epsilon)\lambda_\beta^{\frac{N}{N-1}}d_0^{\frac{1}{N-1}}t}{\vert\vert f\vert\vert_{L_1(\Omega)}^{\frac{1}{N-1}}}\right]}\int_{\partial\Omega_t}\frac{\hat{F}^{o}(x)^{-\beta}}{|\nabla u-\nabla v|}d\sigma(x)dt\\
=&\int_0^{\infty} \exp{\left[\frac{(1-\epsilon)\lambda_\beta^{\frac{N}{N-1}}d_0^{\frac{1}{N-1}}t}{\vert\vert f\vert\vert_{L_1(\Omega)}^{\frac{1}{N-1}}}\right]}\left(-\mu_\beta'(t)\right)dt\\
=&\int_0^{\mu_\beta(0)} \exp{\left[\frac{(1-\epsilon)\lambda_\beta^{\frac{N}{N-1}}d_0^{\frac{1}{N-1}}t(\mu_\beta)}{\vert\vert f\vert\vert_{L_1(\Omega)}^{\frac{1}{N-1}}}\right]}d\mu_\beta\leq\frac{\mu_\beta(0)}{\epsilon}.
\end{align*}
Letting 
\[
    \lambda=\frac{(1-\epsilon)\lambda_\beta^{\frac{N}{N-1}}d_0^{\frac{1}{N-1}}}{\vert\vert f\vert\vert_{L_1(\Omega)}^{\frac{1}{N-1}}},
\]
we get
\begin{align*}
\int_{\Omega}\hat{F}^{o}(x)^{-\beta}e^{\lambda|u-v|}dx\leq\frac{\mu_\beta(0)}{1-\lambda\delta^{-1}\vert\vert f\vert\vert_{L_1(\Omega)}^{\frac{1}{N-1}}}.
\end{align*}
The proof is finished.
\end{proof}

Using this weighted Brezis-Merle type inequality, one can get the regularity result of solution $u$ to \eqref{e1} and the globally logarithmic estimate.
\begin{proposition}\label{P3.1}
    Let $u$ be a solution of \eqref{e1}. Then $u^+\in L^{\infty}(\mathbb{R}^{N})$ and $u\in C_{loc}^{1,\alpha}(\mathbb{R}^{N}\backslash \{0\})$ for some $\alpha\in (0,1)$.
\end{proposition}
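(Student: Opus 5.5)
Following the Brezis--Merle strategy, adapted to the singular weight via Theorem \ref{L1}, the plan is to get a local $L^\infty$ bound for $u^+$ on small Wulff balls, with constants independent of the center, and then upgrade to $C^{1,\alpha}$ away from the origin.

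Set $f:=\hat{F}^{o}(x)^{-\beta}e^u\in L^1(\mathbb{R}^N)$. Fix $\epsilon_0>0$ small, to be chosen. By absolute continuity of the integral there is $r_0>0$ such that $\int_{\mathcal{W}_{2r_0}(x_0)}f\,dx<\epsilon_0$ for every $x_0\in\mathbb{R}^N$; this is uniform in $x_0$ because $f\in L^1$ and the balls have fixed measure. On $\Omega=\mathcal{W}_{2r_0}(x_0)$ we split $u=v+w$, where $v$ is the $Q_N$-harmonic function with $v=u$ on $\partial\Omega$ as in \eqref{0}.

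\emph{Exponential integrability of $w$.} By Theorem \ref{L1}, for any $\lambda<\delta\epsilon_0^{-1/(N-1)}$ the quantity $\int_\Omega \hat{F}^{o}(x)^{-\beta}e^{\lambda|u-v|}dx$ is bounded by a constant times $\int_\Omega\hat F^o(x)^{-\beta}dx$, which is bounded uniformly in $x_0$ since $\beta<N$. Choosing $\epsilon_0$ small, $\lambda$ can be taken as large as we like.

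\emph{Bound on $v^+$.} By the comparison principle $v\le u$, since $u$ is $Q_N$-supersolution ($f>0$). A $Q_N$-harmonic function is a subsolution, so Lemma \ref{L6} with zero right-hand side gives $\|v^+\|_{L^\infty(\mathcal{W}_{r_0}(x_0))}\le C\|v^+\|_{L^N(\mathcal{W}_{2r_0}(x_0))}$. To control $\|v^+\|_{L^N}\le\|u^+\|_{L^N}$ uniformly in $x_0$, use $u^+\le e^u\le \hat F^o(x)^{\beta}e^u\cdot \hat F^o(x)^{-\beta}$. This gives $\int_{\mathcal{W}_{2r_0}(x_0)}u^+\,dx\le C(1+|x_0|)^\beta\|f\|_{L^1}$, which is uniform on compact sets. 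For large $|x_0|$ it is not uniform as written, but $\hat F^o(x)^{-\beta}e^u\in L^1$ still yields $\int_{\mathcal W_{2r_0}(x_0)}e^{u}$ finite there. For the $L^N$ norm I would instead interpolate: bound $\|u^+\|_{L^N}$ by $\|(u-v)^+\|_{L^N}+\|v^+\|_{L^N}$ and handle $v^+$ via a weak Harnack/mean-value estimate in terms of $\|v^+\|_{L^{1/N}}$, and then $(v^+)^{1/N}\le C e^{v/N}\le Ce^{u/N}$ gives control by $\int e^u$. Uniformity at infinity is the delicate point, since $\int e^u$ could be large when the weight decays; it is the main obstacle. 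One option is to settle for $u^+\in L^\infty_{loc}$ first and get the global bound from the scaled function $u_R(x)=u(Rx)+(N-\beta)\log R$, which solves the same equation with the same finite mass. Its local mass on annuli tends to zero as $R\to\infty$, so the local argument applied on the fixed annulus $\{1/2\le\hat F^o\le2\}$ gives $u(x)\le -(N-\beta)\log\hat F^o(x)+C$ for large $|x|$. That is even better than boundedness.

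\emph{Near the origin and conclusion.} Near $0$ take $x_0=0$. By the above, $\hat F^o(x)^{-\beta}e^{u}=f$ lies in $L^{p}$ for some $p>1$ on $\mathcal W_{r_0}$: write $f=\hat F^o(x)^{-\beta}e^{v}e^{w}$ with $e^{v}$ bounded, use Hölder with $\hat F^o(x)^{-\beta s}\in L^1$ for some $s>1$ close to $1$ since $\beta<N$, and take $e^{w}\in L^q$ for large $q$ by the exponential integrability step. So $f\in L^{N/(N-\epsilon)}$ for some $\epsilon\in(0,1]$, and Lemma \ref{L6} gives $u^+\in L^\infty(\mathcal W_{r_0/2}(x_0))$. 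Combined with the behavior at infinity, $u^+\in L^\infty(\mathbb{R}^N)$. Finally, on any compact $K\subset\mathbb{R}^N\setminus\{0\}$ the right-hand side $\hat F^o(x)^{-\beta}e^u$ is bounded, so the $C^{1,\alpha}$ regularity for quasilinear degenerate equations of $N$-growth (DiBenedetto/Tolksdorf/Lieberman, applicable since $Hess(F^N)$ is positive definite) yields $u\in C^{1,\alpha}_{loc}(\mathbb{R}^N\setminus\{0\})$.
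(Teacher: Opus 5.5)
Your argument is correct, and on compact sets it is the paper's argument. You split $u=v+(u-v)$ on a small Wulff ball and bound $v^+$ via $v\le u$ and Lemma \ref{L6}. You get exponential integrability of $u-v$ from Theorem \ref{L1} and use H\"older to place $\hat F^o(x)^{-\beta}e^u$ in $L^{N/(N-\epsilon)}$. Then you apply Lemma \ref{L6} again to $u^+$ and finish with standard $C^{1,\alpha}$ theory. Your uniform radius $r_0$, obtained from absolute continuity of the integral, is a tidier substitute for the paper's finite covering.

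The genuine difference is at infinity. The paper handles it inside this proposition. For $\bar x\notin\overline{\mathcal W_{R_0+1}}$ it uses only that the mass of $\hat F^o(x)^{-\beta}e^u$ on $\mathcal W_1(\bar x)$ is small, and concludes a bound $C(1)$ from \eqref{e7}. However, the constant in \eqref{e3} must dominate $\sup_{\mathcal W_1(\bar x)}\hat F^o(x)^{\beta}\ge \hat F^o(\bar x)^{\beta}$. It enters \eqref{e7} both through $\|u^+\|_{L^N}$ and through the bound \eqref{e4} on $v^+$ used in \eqref{e5}. So the asserted uniformity in $\bar x$ does not follow as written, which is exactly the obstacle you flagged.

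Your fix has two stages. First you prove local boundedness. Then you apply the same local machinery to $u_R(x)=u(Rx)+(N-\beta)\log R$ on a fixed annulus, where the weight is comparable to $1$, the total mass is unchanged, and the annular mass tends to $0$. This is the argument the paper gives in Proposition \ref{P2}. That proof uses the present proposition only on the bounded set $\overline{\mathcal W_{\bar R}}$, so moving it earlier is not circular. It also gives you the sharper decay $u\le C-(N-\beta)\log\hat F^o(x)$ at no extra cost.

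In a write-up, make two adjustments:
\begin{itemize}
\item Drop the weak-Harnack/interpolation detour. It still controls $v^+$ only through the unweighted $\int e^u$ on far balls, so it does not help.
\item In the rescaled step, use Wulff balls of one fixed radius centered on $\partial\mathcal W_1$. The radius must be small enough, depending on $\alpha,\eta$ since $\hat F^o$ need not be even, to stay inside the annulus. The smallness needed for Theorem \ref{L1} should come from the vanishing annular mass, not from absolute continuity of the $R$-dependent densities.
\end{itemize}
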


\begin{proof}
    Let $\bar{x}\in \mathbb{R}^{N}$ and $r>0$, such that $\mathcal{W}_{r}(\bar{x})\subset\mathbb{R}^{N}$. Suppose that $v$ is the weak solution of
\begin{align*}
\begin{cases}
    -Q_{N}v=0& \text{in } \mathcal{W}_{r}(\bar{x}), \\
    v=u& \text{on } \partial\mathcal{W}_{r}(\bar{x}).
\end{cases}
\end{align*}
In view of the comparison principle, we obtain $v\leq u$ in $\mathcal{W}_{r}(\bar{x})$, i.e. $v^+\leq u^+$ in $\mathcal{W}_{r}(\bar{x})$. Then we have
\begin{align}
    \int_{\mathcal{W}_{r}(\bar{x})}(v^+)^{N}dx&\leq \int_{\mathcal{W}_{r}(\bar{x})}(u^+)^{N}dx\leq
    N!\int_{\mathcal{W}_{r}(\bar{x})}e^{u}dx\leq
    N!C(r)\int_{\mathcal{W}_{r}(\bar{x})}\hat{F}^{o}(x)^{-\beta}e^{u}dx\nonumber\\
    &\leq N!C(r)\int_{\mathbb{R}^{N}}\hat{F}^{o}(x)^{-\beta}e^{u}dx<+\infty\label{e3}.
\end{align}
Applying Serrin’s local $L^{\infty}$ estimate to $v^+$ (Lemma \ref{L6}), we get
\begin{align}\label{e4}
    \vert\vert v^+\vert\vert_{L^{\infty}(\mathcal{W}_{\frac{r}{2}}(\bar{x}))}\leq C(r)\vert\vert v^+\vert\vert_{L^{N}(\mathcal{W}_{r}(\bar{x}))}\leq C(r).
\end{align}

Fix $\epsilon>0$ satisfies $\epsilon<N-\beta$. Assume that $r$ small enough, such that
\begin{align*}
    \vert\vert \hat{F}^{o}(x)^{-\beta}e^u\vert\vert^{\frac{1}{N-1}}_{L^{1}(\mathcal{W}_{r}(\bar{x}))}\leq \frac{(N-\beta-\epsilon)\delta}{3(N-\beta)},
\end{align*}
where $\delta$ is defined in Theorem \ref{L1}. Taking
\[
    q=\frac{2(N-\beta)(N-\epsilon)}{N(N-\beta-\epsilon)}>1.
\]
Applying H\"{o}lder's inequality yields
\begin{align}\label{e6}
&\int_{\mathcal{W}_{r}(\bar{x})}\hat{F}^{o}(x)^{-\frac{N}{N-\epsilon}\beta}e^{\frac{N}{N-\epsilon} (u-v)}dx\nonumber\\
=&\int_{\mathcal{W}_{r}(\bar{x})}\hat{F}^{o}(x)^{-\frac{N}{N-\epsilon}\beta+\frac{\beta}{q}}\hat{F}^{o}(x)^{-\frac{\beta}{q}}e^{\frac{N}{N-\epsilon} (u-v)}dx\nonumber\\
\leq&\left(\int_{\mathcal{W}_{r}(\bar{x})}\hat{F}^{o}(x)^{-\frac{N}{N-\epsilon}\frac{q}{q-1}\beta+\frac{\beta}{q-1}}dx\right)^{\frac{q-1}{q}}\left(\int_{\mathcal{W}_{r}(\bar{x})}\hat{F}^{o}(x)^{-\beta}e^{\frac{qN}{N-\epsilon} (u-v)}dx\right)^{\frac{1}{q}}.
\end{align}
It is easy to see that
\[
    -\frac{N}{N-\epsilon}\frac{q}{q-1}\beta+\frac{\beta}{q-1}>-N,
\]
which implies that the integral
\begin{align}\label{e49}
\int_{\mathcal{W}_{r}(\bar{x})}\hat{F}^{o}(x)^{-\frac{N}{N-\epsilon}\frac{q}{q-1}\beta+\frac{\beta}{q-1}}dx\leq C(r).
\end{align}
Notice that
\[
    \frac{qN}{N-\epsilon}=\frac{2(N-\beta)}{N-\beta-\epsilon}<\frac{3(N-\beta)}{N-\beta-\epsilon}\leq \delta\vert\vert \hat{F}^{o}(x)^{-\beta}e^u\vert\vert^{-\frac{1}{N-1}}_{L^{1}(\mathcal{W}_{r}(\bar{x}))}.
\]
According to Theorem \ref{L1}, we obtain
\begin{align}\label{e48}
\int_{\mathcal{W}_{r}(\bar{x})}\hat{F}^{o}(x)^{-\beta}e^{\frac{qN}{N-\epsilon} (u-v)}dx\leq C(r).
\end{align}
Therefore, from \eqref{e49} and \eqref{e48}, we know that
\begin{align*}
\int_{\mathcal{W}_{r}(\bar{x})}\hat{F}^{o}(x)^{-\frac{N}{N-\epsilon}\beta}e^{\frac{N}{N-\epsilon} (u-v)}dx\leq C(r).
\end{align*}
This together with \eqref{e4} implies
\begin{align}\label{e5}
\int_{\mathcal{W}_{\frac{r}{2}}(\bar{x})}\hat{F}^{o}(x)^{-\frac{N}{N-\epsilon}\beta}e^{\frac{N}{N-\epsilon} u}dx=\int_{\mathcal{W}_{\frac{r}{2}}(\bar{x})}\hat{F}^{o}(x)^{-\frac{N}{N-\epsilon}\beta}e^{\frac{N}{N-\epsilon}(u-v)}e^{\frac{N}{N-\epsilon} v}dx\leq C(r).
\end{align}
Thanks to \eqref{e3} and \eqref{e5}, applying Serrin’s local $L^{\infty}$ estimate (Lemma \ref{L6}) to $u^+$, we have
\begin{align}\label{e7}
    \vert\vert u^+\vert\vert_{L^{\infty}(\mathcal{W}_{\frac{r}{4}}(\bar{x}))}\leq C(r)\vert\vert u^+\vert\vert_{L^{N}(\mathcal{W}_{\frac{r}{2}}(\bar{x}))}+C(r)\vert\vert \hat{F}^{o}(x)^{-\beta}e^{u}\vert\vert_{L^{\frac{N}{N-\epsilon}}(\mathcal{W}_{\frac{r}{2}}(\bar{x}))}\leq C(r).
\end{align}

Let $R_0>0$. For any $\bar{x}\in \mathbb{R}^{N}\backslash\overline{\mathcal{W}_{R_0+1}}$ and assume that $r=1$, as long as $R_0$ is sufficiently large, it holds
\begin{align*}
    \vert\vert \hat{F}^{o}(x)^{-\beta}e^u\vert\vert^{\frac{1}{N-1}}_{L^{1}(\mathcal{W}_{1}(\bar{x}))}\leq \vert\vert \hat{F}^{o}(x)^{-\beta}e^u\vert\vert^{\frac{1}{N-1}}_{L^{1}(\mathbb{R}^{N}\backslash{\mathcal{W}_{R_0}})}\leq\frac{(N-\beta-\epsilon)\delta}{3(N-\beta)}.
\end{align*}
Hence, it follows from \eqref{e7} that $\vert\vert u^+\vert\vert_{L^{\infty}(\mathbb{R}^{N}\backslash\overline{\mathcal{W}_{R_0+1}})}\leq C(1)$.
Due to $\overline{\mathcal{W}_{R_0+1}}$ is compact, one can find a finite number of points $x_i$ and numbers $r_i$ $(i=1,...,m)$ satisfying 
\begin{align*}
    \vert\vert \hat{F}^{o}(x)^{-\beta}e^u\vert\vert^{\frac{1}{N-1}}_{L^{1}(\mathcal{W}_{r_i}(x_i))}\leq\frac{(N-\beta-\epsilon)\delta}{3(N-\beta)}
\end{align*}
such that $\overline{\mathcal{W}_{R_0+1}}\subset\bigcup_{i=1}^{m}\mathcal{W}_{r_i/4}(x_i)$. By \eqref{e7} again, we have $\vert\vert u^+\vert\vert_{L^{\infty}(\overline{\mathcal{W}_{R_0+1}})}\leq +\infty$. Therefore, we obtain that $u^+\in L^{\infty}(\mathbb{R}^{N})$ and $\hat{F}^{o}(x)^{-\beta}e^u\in L^{\infty}(\mathbb{R}^{N}\backslash \{0\})$. We can apply the classical regularity results in \cite{D,S1,T} to deduce that $u\in C^{1,\alpha}_{loc}(\mathbb{R}^{N}\backslash \{0\})$ for some $\alpha\in (0,1)$.
\end{proof}

\begin{proposition}\label{P2}
    Let $u$ be a solution of \eqref{e1}. Then there exists a constant $C>0$ such that
\begin{align}\label{e9}
    u(x)\leq C-(N-\beta)\log \hat{F}^{o}(x)\ \ \text{for\ any\ }x\in \mathbb{R}^{N}\backslash\{0\}.
\end{align}
\end{proposition}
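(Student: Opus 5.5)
Proposition \ref{P3.1} gives $u^+\in L^\infty(\mathbb{R}^N)$. So \eqref{e9} holds trivially on any bounded region away from infinity: there $-(N-\beta)\log\hat{F}^{o}(x)$ is bounded below, and near $0$ it is even positive. It therefore suffices to prove \eqref{e9} for $\hat{F}^{o}(x)\ge R$, with $R$ large. I would do this by scaling, reducing the claim to a uniform $L^\infty$ bound on rescaled functions on a fixed annulus.

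For $\hat{F}^{o}(x_0)=r\ge R$ set
\[
u_r(y)=u(ry)+(N-\beta)\log r .
\]
Using the homogeneity of $F$ and $\hat{F}^{o}$, $u_r$ solves
\[
-Q_N u_r=\hat{F}^{o}(y)^{-\beta}e^{u_r} \quad\text{in } \mathbb{R}^N\setminus\{0\},
\]
since $r^N\cdot r^{-\beta}\cdot r^{-(N-\beta)}=1$. Moreover
\[
\int_{A}\hat{F}^{o}(y)^{-\beta}e^{u_r}\,dy=\int_{rA}\hat{F}^{o}(x)^{-\beta}e^{u}\,dx
\]
for the annulus $A=\mathcal{W}_{4}\setminus\overline{\mathcal{W}_{1/4}}$. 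By the finite mass condition, this mass tends to $0$ as $r\to\infty$. The claim \eqref{e9} is then equivalent to $u_r\le C$ on $\partial\mathcal{W}_1$, uniformly in $r\ge R$.

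To prove the uniform bound, I would repeat the argument of Proposition \ref{P3.1} for $u_r$ on Wulff balls $\mathcal{W}_{\rho}(\bar y)\subset A$ centred at points $\bar y\in\partial\mathcal{W}_1$, with $\rho$ fixed. On $A$ the weight $\hat{F}^{o}(y)^{-\beta}$ is bounded above and below, so the singularity plays no role. Once $r$ is large, the $L^1$ norm of $f_r=\hat{F}^{o}(y)^{-\beta}e^{u_r}$ on these balls is below the threshold $\frac{(N-\beta-\epsilon)\delta}{3(N-\beta)}$. Theorem \ref{L1}, with $v_r$ the $N$-harmonic function equal to $u_r$ on the boundary, then gives a uniform bound on $\int e^{q'(u_r-v_r)}$ for some $q'>1$.

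What remains is controlling $v_r^+$, and here the argument must differ from Proposition \ref{P3.1}. There, $\int (u^+)^N$ was bounded by $N!\int e^u$ on a fixed ball. Here that step is still valid:
\[
\int_{\mathcal{W}_\rho(\bar y)}(u_r^+)^N\,dy\le N!\int_{\mathcal{W}_\rho(\bar y)} e^{u_r}\,dy\le C\int_A \hat{F}^{o}(y)^{-\beta}e^{u_r}\,dy\le C,
\]
uniformly in $r$. By comparison $v_r\le u_r$, so the same bound holds for $\int (v_r^+)^N$. Serrin's estimate, Lemma \ref{L6}, then bounds $v_r^+$ on $\mathcal{W}_{\rho/2}(\bar y)$. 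Combining the two bounds shows $f_r$ is bounded in $L^{N/(N-\epsilon)}(\mathcal{W}_{\rho/2}(\bar y))$, uniformly in $r$.

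Applying Lemma \ref{L6} to $u_r^+$ now gives $\sup_{\mathcal{W}_{\rho/4}(\bar y)}u_r\le C$, with $C$ independent of $r$ and $\bar y$. Scaling back yields
\[
u(x_0)\le C-(N-\beta)\log \hat{F}^{o}(x_0)
\]
for all $x_0$ with $\hat{F}^{o}(x_0)\ge R$, which is \eqref{e9}.

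The main point to verify carefully is that all constants are uniform in the scaling parameter $r$. This comes from the rescaled equation being identical to the original one and from the mass of $u_r$ on $A$ being uniformly small. The key observation is that the exponent $N-\beta$ is exactly the one that makes the rescaled equation invariant.
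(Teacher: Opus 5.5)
Your proposal is correct and follows the same route as the paper. You rescale by $u_R(x)=u(Rx)+(N-\beta)\log R$ and use that the mass of $u_R$ near $\partial\mathcal{W}_1$ (the tail mass of $u$ outside $\mathcal{W}_{R/2}$) is small, which lets you apply the weighted Brezis--Merle inequality; you control the $N$-harmonic replacement $v_R^+$ via $(u_R^+)^N\le N!\,e^{u_R}$ and Serrin's estimate, conclude $u_R\le C$ on $\partial\mathcal{W}_1$ uniformly in $R$, and let Proposition \ref{P3.1} handle the bounded region. Working in an annulus where $\hat{F}^{o}(y)^{-\beta}$ is bounded above and below lets you skip the paper's H\"older step with exponent $q$, but that is only a minor simplification; also, your control of $v_R^+$ is the same argument as in Proposition \ref{P3.1}, not a different one.
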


\begin{proof}
For $R>0$, denote by
\begin{align*}
    u_R(x)=u(Rx)+(N-\beta)\log R,\ \ \forall x\in \mathbb{R}^{N}.
\end{align*}
Clearly, $u_R$ is also a solution of \eqref{e1}. Given $x_0\in\partial\mathcal{W}_{1}$. Let $v_R$ is the weak solution of
\begin{align*}
\begin{cases}
    -Q_{N}v_R=0& \text{in } \mathcal{W}_{\frac{1}{2}}(x_0), \\
    v_R=u_R& \text{on } \partial\mathcal{W}_{\frac{1}{2}}(x_0).
\end{cases}
\end{align*}
Using the similar argument as \eqref{e3} and \eqref{e4}, one has
\begin{align*}
    \int_{\mathcal{W}_{\frac{1}{2}}(x_0)}(v_R^+)^{N}dx&\leq \int_{\mathcal{W}_{\frac{1}{2}}(x_0)}(u_R^+)^{N}dx\leq N!C\int_{\mathbb{R}^{N}}\hat{F}^{o}(x)^{-\beta}e^{u_R}dx\\
    &= N!C\int_{\mathbb{R}^{N}}\hat{F}^{o}(x)^{-\beta}e^{u}dx<+\infty,
\end{align*}
and
\begin{align*}
    \vert\vert v_R^+\vert\vert_{L^{\infty}(\mathcal{W}_{\frac{1}{4}}(x_0))}\leq C.
\end{align*}

Since
\begin{align*}
    \int_{\mathcal{W}_{\frac{1}{2}}(x_0)}\hat{F}^{o}(x)^{-\beta}e^{u_R}dx\leq\int_{\mathbb{R}^{N}\backslash\mathcal{W}_{\frac{1}{2}}}\hat{F}^{o}(x)^{-\beta}e^{u_R}dx=\int_{\mathbb{R}^{N}\backslash\mathcal{W}_{\frac{R}{2}}}\hat{F}^{o}(x)^{-\beta}e^{u}dx,
\end{align*}
there exists $\bar{R}$ big enough such that for any $R>\bar{R}$, we have
\begin{align*}
    \vert\vert \hat{F}^{o}(x)^{-\beta}e^{u_R}\vert\vert^{\frac{1}{N-1}}_{L^{\infty}(\mathcal{W}_{\frac{1}{2}}(x_0))}\leq \frac{(N-\beta-\epsilon)\delta}{3(N-\beta)}.
\end{align*}
Using the similar arguments as in \eqref{e6}-\eqref{e7}, we obtain
\begin{align*}
    \vert\vert u_R^+\vert\vert_{L^{\infty}(\mathcal{W}_{\frac{1}{8}}(x_0))}\leq C,
\end{align*}
for any $R>\bar{R}$. This implies
\begin{align*}
    u_R^+(x)<C\ \ \text{on\ }\partial\mathcal{W}_{1},
\end{align*}
in view of the definitions of $u_R$, we see that
\begin{align*}
    u(Rx)+(N-\beta)\log R<C\ \ \text{on\ }\partial\mathcal{W}_{1}.
\end{align*}
Hence, 
\begin{align}\label{e8}
    u(x)+(N-\beta)\log \hat{F}^{o}(x)<C,
\end{align}
for all $x\in \mathbb{R}^{N}\backslash\mathcal{W}_{\bar{R}}$. Since $u^+\in L^{\infty}(\mathbb{R}^{N})$ from Proposition \ref{P3.1}, it is easy to see that \eqref{e8} holds for $x\in \overline{\mathcal{W}_{\bar{R}}}\backslash\{0\}$. Then we accomplish the proof.
\end{proof}

In the following, we will study the limit behavior for solution $u$ of \eqref{e1} and $\nabla u$ at infinity. For the convenience, we set
\begin{align}\label{e50}
    \gamma_0:=\left[\frac{\int_{\mathbb{R}^{N}}\hat{F}^{o}(x)^{-\beta}e^u dx}{N\kappa_N}\right]^{\frac{1}{N-1}}.
\end{align}
\begin{proposition}\label{P4}
    Let $u$ be a solution of \eqref{e1}. Then
\begin{align}\label{e33}
    u(x)+\gamma_0\log \hat{F}^{o}(x)\in L^{\infty}(\mathbb{R}^{N}\backslash \mathcal{W}_{1}),
\end{align}
and
\begin{align}\label{e34}
    \lim_{\hat{F}^{o}(x) \to +\infty}\hat{F}^{o}(x)F(\nabla(u(x)+\gamma_0\log \hat{F}^{o}(x)))=0.
\end{align}
\end{proposition}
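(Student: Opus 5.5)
We follow the strategy of Ciraolo--Li \cite{CL}. Two inputs do most of the work: the pointwise decay of Proposition \ref{P2} and the regularity of Proposition \ref{P3.1}.

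\textbf{Step 1 (decay of the right-hand side).} By Proposition \ref{P2}, $f:=\hat{F}^{o}(x)^{-\beta}e^{u}\le C\hat{F}^{o}(x)^{-N}$ away from the origin. This bound is exactly the hypothesis needed to compare $u$ with the fundamental-solution-type function $-\gamma_0\log\hat{F}^{o}$. We first record that $Q_N(\log \hat F^o(x))=N\kappa_N\delta_0$ in the anisotropic sense; this follows from Lemma \ref{L2.1}(iv),(v), since $F(\nabla\log\hat F^o)=1/\hat F^o$ and $DF(\nabla \hat F^o)=x/\hat F^o$ up to the sign conventions for $\hat{\ }$. Consequently the flux of $-u$ through $\partial\mathcal{W}_R$ tends to $\int_{\mathbb{R}^N}f=N\kappa_N\gamma_0^{N-1}$.

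\textbf{Step 2 (boundedness of $u+\gamma_0\log\hat F^o$).} Set $w:=-u$ on $\mathbb{R}^N\setminus\mathcal{W}_1$, after adding a constant so that $w\ge0$. This is possible because Proposition \ref{P2} gives $w\ge (N-\beta)\log\hat F^o-C\to+\infty$. Rewriting the equation for $w$ in terms of $\hat F$ (note $F(-\xi)=\hat F(\xi)$), Theorem \ref{Th2} applies and yields $w\sim\log\hat F^o$ in the sense of two-sided bounds. To sharpen this to \eqref{e33}, we use a scaling/comparison argument. For large $R$, compare $u$ on the annulus $\mathcal{W}_{R}\setminus\mathcal{W}_{R_0}$ with the $Q_N$-harmonic radial functions $a\pm\gamma_\pm\log\hat F^o$, whose fluxes match $\int_{\mathcal{W}_R}f$ and $\int_{\mathbb{R}^N}f$ respectively, and use the comparison principle. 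The flux of $u$ through $\partial\mathcal{W}_{R}$ differs from $N\kappa_N\gamma_0^{N-1}$ by $\int_{\mathbb{R}^N\setminus\mathcal{W}_R}f\le CR^{-(N-\beta)}\cdot$(mass tail). This tail is summable in dyadic scales, which is what gives a bounded rather than merely $o(\log)$ error. Alternatively, and this is what we would try first, we would write $u+\gamma_0\log\hat F^o=H$ and test the equation against suitable truncations of $H$ on dyadic annuli, following \cite[Section 6]{CL}. This produces an oscillation decay for $H$ with a summable error controlled by the tails $\int_{\hat F^o>2^k}f$. Since $f\le C\hat F^{o\,-N}$ and $f$ is integrable, these tails decay, and to obtain summability we would first improve the decay via $u\le C-(1/d)\log\hat F^o$ iteratively.

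\textbf{Step 3 (gradient asymptotics \eqref{e34}).} We use the scaling of Kichenassamy--V\'eron \cite{KV}. Set $H_R(x):=u(Rx)+\gamma_0\log\hat F^o(Rx)-\gamma_0\log R$ on $\mathcal{W}_4\setminus\mathcal{W}_{1/4}$. By Step 2 the $H_R$ are uniformly bounded. The functions $u_R(x)=u(Rx)+\gamma_0\log R$ satisfy $-Q_Nu_R=R^{N}f(Rx)$, and this right-hand side tends to $0$ uniformly on the annulus, because $R^Nf(Rx)\le CR^{N-\beta}e^{u(Rx)}\le C R^{N-\beta-\gamma_0}$ and $\gamma_0>N-\beta$ is forced by integrability of $f$ together with Step 2. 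By $C^{1,\alpha}$ estimates \cite{D,T}, $H_R\to H_\infty$ in $C^1_{loc}$ along subsequences, with $Q_N(-\gamma_0\log\hat F^o+H_\infty)=0$ in $\mathbb{R}^N\setminus\{0\}$ and $H_\infty$ bounded. Lemma \ref{L4} then forces $H_\infty$ to be constant, so $\nabla H_R\to0$ uniformly on $\partial\mathcal{W}_1$, which is exactly \eqref{e34}.

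\textbf{Main obstacle.} We expect Step 2 to be the main difficulty, namely upgrading the $\log$-comparability of Theorem \ref{Th2} to an $L^\infty$ bound with the sharp coefficient $\gamma_0$. The summability of flux errors across dyadic scales is the delicate point there.
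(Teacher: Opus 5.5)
Your reduction to Theorem \ref{Th2} via $t_0-u$ and your Step 3 (blow-down, $C^{1,\alpha}$ compactness, Lemma \ref{L4}) coincide with the paper's argument for \eqref{e34}, apart from two slips. First, for non-symmetric $F$ the correct identity is $-Q_N(-\gamma\log\hat{F}^{o})=N\kappa_N\gamma^{N-1}\delta_0$; one has $\hat{F}(\nabla\hat{F}^{o})=1$, not $F(\nabla\hat{F}^{o})=1$. Second, $H_R$ should be $H(Rx)=u(Rx)+\gamma_0\log R+\gamma_0\log\hat{F}^{o}(x)$.

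The genuine gap is Step 2, and neither mechanism you sketch works as stated. Comparing $u$ on $\mathcal{W}_R\setminus\mathcal{W}_{R_0}$ with $a\pm\gamma_\pm\log\hat{F}^{o}$ requires inequalities on $\partial\mathcal{W}_R$, which is exactly what is unknown. Matching fluxes is not an input of the comparison principle, and since $u$ is not Wulff-radial its flux does not control its values. The tail bound $\int_{\mathbb{R}^N\setminus\mathcal{W}_R}f\le CR^{-(N-\beta)}\cdot(\dots)$ is also unsupported. Proposition \ref{P2} only gives $f\le C\hat{F}^{o}(x)^{-N}$, which is not integrable at infinity and yields no rate. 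The bound $u\le-\frac1d\log\hat{F}^{o}$ from Theorem \ref{Th2} has an uncontrolled $d>1$: it gives an integrable power only if $1/d>N-\beta$, and you offer no mechanism to iterate it. So the decay that would make your dyadic errors summable is, as written, only available after Step 2 is proved. Indeed, in Step 3 you derive $\gamma_0>N-\beta$ from Step 2 itself.

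The missing idea is a one-sided bound that uses nothing but $f\ge0$. Put $-\gamma=\liminf_{\hat{F}^{o}(x)\to\infty}u/\log\hat{F}^{o}$ (finite by Theorem \ref{Th2}) and $\theta=\inf_{\partial\mathcal{W}_2}u$. Since $-Q_Nu\ge0$, the weak maximum principle on $\mathcal{W}_R\setminus\overline{\mathcal{W}_2}$ shows, for large $R$, that $\Gamma(R)=\inf_{2\le\hat{F}^{o}\le R}(u-\theta)/\log\hat{F}^{o}$ is attained on $\partial\mathcal{W}_R$. Being non-increasing with limit $-\gamma$, it satisfies $\Gamma(R)\ge-\gamma$, i.e.\ $u\ge-\gamma\log\hat{F}^{o}-C$.

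Only now does finite mass give the \emph{strict} inequality $\gamma>N-\beta$. The blow-down $(u(Rx)-\theta)/\log R\to$ const (Lemma \ref{L4}) then gives $u/\log\hat{F}^{o}\to-\gamma$. Hence $f\le C\hat{F}^{o}(x)^{-\beta-\gamma_1}$ with $\gamma_1>N-\beta$, which is the power decay you lacked.

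The upper bound $u\le-\gamma\log\hat{F}^{o}+C$ then follows by comparison on $\mathcal{W}_{R_\epsilon}\setminus\overline{\mathcal{W}_{t_0}}$ with the explicit Wulff-radial supersolution of $-Q_Nv=C_1\hat{F}^{o}(x)^{-\beta-\gamma_1}$. The boundary inequality on $\partial\mathcal{W}_{R_\epsilon}$ comes from $u\le-(\gamma-\epsilon)\log\hat{F}^{o}$. The deviation of $v$ from $-(\gamma-\epsilon)\log\hat{F}^{o}$ is bounded uniformly in $\epsilon$ because $\tau^{N-\beta-\gamma_1-1}$ is integrable at infinity.

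Finally, the coefficient is identified last, not first. Once your Step 3 gives $F(\nabla u)=\gamma/\hat{F}^{o}+o(1/\hat{F}^{o})$, your flux observation yields $N\kappa_N\gamma^{N-1}=\int_{\mathbb{R}^N}f$, i.e.\ $\gamma=\gamma_0$.
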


\begin{proof}
Using Proposition \ref{P2}, we know that
\begin{align}\label{e13}
    0\leq \hat{F}^{o}(x)^{-\beta}e^{u(x)}\leq\frac{C}{\hat{F}^{o}(x)^{N}}\ \ \text{in\ }\mathbb{R}^{N}\backslash \mathcal{W}_{1}.
\end{align}
Denote by $\tilde{u}=t_0-u\geq0$, where $t_0:=\sup\limits_{\mathbb{R}^{N}}u$. From \eqref{e1} and the fact that $\hat{F}(\xi)=F(-\xi)$, we know that $\tilde{u}$ is a solution of
\begin{align}\label{e10}
    \mathrm{div}\left(\hat{F}^{N-1}(\nabla\tilde{u})D\hat{F}(\nabla\tilde{u})\right)=\hat{F}^{o}(x)^{-\beta}e^{u}\ \ \text{in\ }\mathbb{R}^{N}\backslash \{0\}.
\end{align}
In light of \eqref{e9}, we obtain $\tilde{u}\rightarrow +\infty$ as $\hat{F}^{o}(x)\rightarrow +\infty$. From \eqref{e13}, applying Theorem \ref{Th2} to \eqref{e10}, there exists a constant $d>1$, we get
\begin{align}\label{e11}
    -d\log \hat{F}^{o}(x)\leq u(x)\leq -\frac{1}{d}\log \hat{F}^{o}(x)
\end{align}
in a neighborhood of infinity.

Let $\gamma>0$ be such that
\begin{align}\label{e45}
    -\gamma=\liminf_{\hat{F}^{o}(x) \rightarrow +\infty}\frac{u(x)}{\log \hat{F}^{o}(x)}.
\end{align}
According to \eqref{e11}, we know that $\gamma$ exists.

Step 1. We claim that
\begin{align}\label{e15}
    u(x)+\gamma\log \hat{F}^{o}(x)\geq C\ \ \text{in\ }\mathbb{R}^{N}\backslash \mathcal{W}_{1},
\end{align}
for some constant $C$.

Set $\theta:=\inf\limits_{\partial{\mathcal{W}_{2}}}u$. Now, let $u_R(x)=\frac{u(Rx)-\theta}{\log R}$ and $f_R(x)=e^{u(Rx)}$ for $R>2$ and $\hat{F}^{o}(x)>\frac{1}{R}$. From \eqref{e11}, we have
\begin{align}\label{e14}
    \vert u_R(x)\vert\leq \frac{d\log \hat{F}^{o}(Rx)+C}{\log R}.
\end{align}
This implies $u_R$ is uniformly bounded in $L_{loc}^{\infty}(\mathbb{R}^{N}\backslash \{0\})$.

On the other hand, $u_R$ satisfies the equation
\begin{align}\label{e12}
    -Q_{N}u_R=\frac{R^{N-\beta}}{(\log R)^{N-1}}\hat{F}^{o}(x)^{-\beta}f_{R}\ \ \text{in\ }\mathbb{R}^{N}\backslash \overline{\mathcal{W}_{\frac{1}{R}}}.
\end{align}
By \eqref{e13}, we have $\hat{F}^{o}(x)^{-\beta}f_{R}\leq\frac{C}{R^{N-\beta}\hat{F}^{o}(x)^{N}}$ for $x\in \mathbb{R}^{N}\backslash \{0\}$, which implies that the right-hand side of \eqref{e12} is also uniformly bounded in $L_{loc}^{\infty}(\mathbb{R}^{N}\backslash \{0\})$. Applying the standard elliptic regularity
theory, we get $u_R$ is bounded in $C_{loc}^{1,\alpha}(\mathbb{R}^{N}\backslash \{0\})$ for some $\alpha\in (0,1)$. Therefore, by the Ascoli-Arzela's theorem there exists a subsequence $\{u_{R_i}\}$ such that
\begin{align*}
    R_i\rightarrow+\infty,\ \ u_{R_i}\rightarrow u_{\infty}\ \ \text{in\ }C^1_{loc}(\mathbb{R}^{N}\backslash \{0\}),
\end{align*}
with some $u_{\infty}\in C^1_{loc}(\mathbb{R}^{N}\backslash \{0\})$ satisfying $-Q_{N}u_\infty=0$ in $\mathbb{R}^{N}\backslash \{0\}$.

From \eqref{e14}, we see that $\vert u_\infty(x)\vert\leq C$ for any $x\neq 0$. It follows from Lemma \ref{L4} that $u_\infty$ is a constant. 

Next, we will determine this constant. Define
\begin{align*}
    \Gamma(R)=\inf_{2\leq \hat{F}^{o}(x)\leq R}\frac{u(x)-\theta}{\log \hat{F}^{o}(x)}.
\end{align*}
It is easy to know that $\Gamma(R)$ is non-increasing and $\Gamma(R)<0$ for large $R$ by \eqref{e9}. Since $u-\theta$ satisfies
\begin{align*}
    -Q_{N}(u-\theta)=\hat{F}^{o}(x)^{-\beta}e^{u}\geq0\ \ \text{in\ }\mathcal{W}_{R}\backslash\overline{\mathcal{W}_{2}}
\end{align*}
provide $R$ large enough, and
\begin{align*}
    u(x)-\theta\geq0\ \ \text{on\ }\partial{\mathcal{W}_{2}},\ \ u(x)-\theta\geq\Gamma(R)\log \hat{F}^{o}(x)\ \ \text{on\ }\partial{\mathcal{W}_{R}}.
\end{align*}
From the weak maximum principle, we see that $u(x)-\theta$ attains its minimum on the boundary of $\partial{\mathcal{W}_{R}}$, i.e.,
\begin{align*}
    u(x)-\theta\geq\inf_{y\in\partial{\mathcal{W}_{R}}}u(y)-\theta\ \ \text{for\ }x\in\mathcal{W}_{R}\backslash\overline{\mathcal{W}_{2}}.
\end{align*}
Thus,
\begin{align*}
    \frac{u(x)-\theta}{\log \hat{F}^{o}(x)}\geq\frac{\inf\limits_{y\in\partial{\mathcal{W}_{R}}}u(y)-\theta}{\log \hat{F}^{o}(x)}\geq\frac{\inf\limits_{y\in\partial{\mathcal{W}_{R}}}u(y)-\theta}{\log R}\ \ \text{for\ }x\in\mathcal{W}_{R}\backslash\overline{\mathcal{W}_{2}},
\end{align*}
which implies that
\begin{align*}
    \Gamma(R)=\inf_{x\in\partial{\mathcal{W}_{R}}}\frac{u(x)-\theta}{\log \hat{F}^{o}(x)}.
\end{align*}
According to the definition of $-\gamma$, we have $\lim\limits_{R\rightarrow +\infty}\Gamma(R)=-\gamma$. Since $\Gamma(R)$ is non-increasing, we get $\Gamma(R)\geq-\gamma$ for $R>2$. Hence,
\begin{align*}
u(x)\geq-\gamma\log \hat{F}^{o}(x)+\theta,\ \ \ \text{if}\ \ \hat{F}^{o}(x)\geq R,
\end{align*}
which together with $u^+\in L^{\infty}(\mathbb{R}^{N})$ gives \eqref{e15}. 

Step 2. We will show that
\begin{align}\label{e16}
    \lim_{ \hat{F}^{o}(x)\rightarrow +\infty}\frac{u(x)}{\log \hat{F}^{o}(x)}=-\gamma.
\end{align}
For large $R$, let $x_R\in\partial{\mathcal{W}_{1}}$ be such that $u_R(x_R)=\Gamma(R)$. By extracting a subsequence, we can assume that $x_{R_i}\rightarrow x_{\infty}$ on $\partial{\mathcal{W}_{1}}$. We can derive that 
\begin{align*}
    u_{\infty}(x_{\infty})=\lim_{R_i\rightarrow+\infty}u_{R_i}(x_{R_i})=\lim_{R_i\rightarrow+\infty}\Gamma(R_i)=-\gamma.
\end{align*}
Hence, from the uniqueness of $-\gamma$, we have
\begin{align*}
    u_{R}\rightarrow-\gamma\ \ \text{in}\ C_{loc}^{1}(\mathbb{R}^{N}\backslash \{0\}),\ \ \text{as\ }R\rightarrow+\infty,
\end{align*}
which implies \eqref{e16} hold. 

Step 3. In this step, we will improve the upper estimate of Proposition \ref{P2}. It follows from \eqref{e15} that
\begin{align*}
    \int_{\mathbb{R}^{N}}\hat{F}^{o}(x)^{-\beta}e^u dx\geq C\int_{\mathbb{R}^{N}\backslash \mathcal{W}_{1}}\hat{F}^{o}(x)^{-\beta} \hat{F}^{o}(x)^{-\gamma}dx.
\end{align*}
This together with the fact $\int_{\mathbb{R}^{N}}\hat{F}^{o}(x)^{-\beta}e^u dx<+\infty$ implies
\begin{align*}
    \gamma>N-\beta.
\end{align*}
Fix $0<\epsilon_0\ll1$ such that
\begin{align}\label{e51}
-\gamma_1:=-\gamma+\epsilon_0<-(N-\beta).
\end{align}
Hence, by \eqref{e16}, we have
\begin{align}\label{e18}
    u(x)\leq C-\gamma_1\log \hat{F}^{o}(x)\ \ \text{in\ }\mathbb{R}^{N}\backslash {\mathcal{W}_{1}},
\end{align}
for some constant $C$.

Step 4. In this step, we will construct a suitable supersolution to \eqref{e1} in an exterior domain.

Let $v(x)$ be a smooth, radially decreasing function on $x\in\mathbb{R}^{N}$ in the sense of Finsler metric $\hat{F}^{o}$. Since $v$ is radial, we rewrite
\begin{align*}
    v(x)=v(t),
\end{align*}
where $t=\hat{F}^{o}(x)$. Hence,
\begin{align*}
    -Q_{N}v=t^{1-N}(t^{N-1}(-v'(t))^{N-1})'.
\end{align*}
For $\hat{F}^{o}(x)\geq1$, it follows from $\eqref{e18}$ that $\hat{F}^{o}(x)^{-\beta}e^u\leq C_1 \hat{F}^{o}(x)^{-\beta-\gamma_1}$ for some $C_1>0$. In order to get a supersolution to \eqref{e1}, for some given $t_0\geq1$, let $v$ be a solution of
\begin{align*}
    t^{1-N}(t^{N-1}(-v'(t))^{N-1})'=C_1 t^{-\beta-\gamma_1}\ \ \text{for\ }t>t_0.
\end{align*}
Recalling that \eqref{e51}. Multiplying this equality by $t^{N-1}$ and then integrating from $t_0$ to $t$, we have
\begin{align}
    t^{N-1}(-v'(t))^{N-1}-t_0^{N-1}(-v'(t_0))^{N-1}&=C_1\int_{t_0}^{t} \tau^{N-\beta-\gamma_1-1}d\tau\nonumber\\
    &=\frac{C_1}{N-\beta-\gamma_1}\left(t^{N-\beta-\gamma_1}-t_0^{N-\beta-\gamma_1}\right)\label{e17}.
\end{align}
Denote by
\begin{align*}
    a=-\frac{N-\beta-\gamma_1}{C_1}t_0^{N-1}(-v'(t_0))^{N-1}+t_0^{N-\beta-\gamma_1},
\end{align*}
it follows from \eqref{e51} that
\begin{align*}
    a\geq t_0^{N-\beta-\gamma_1}>t^{N-\beta-\gamma_1}.
\end{align*}
\eqref{e17} can be rewritten as
\begin{align*}
    t^{N-1}(-v'(t))^{N-1}=\frac{C_1}{N-\beta-\gamma_1}\left(-a+t^{N-\beta-\gamma_1}\right),
\end{align*}
which is equivalent to
\begin{align*}
    -v'(t)=\left(\frac{C_1}{N-\beta-\gamma_1}\frac{-a+t^{N-\beta-\gamma_1}}{t^{N-1}}\right)^{\frac{1}{N-1}}.
\end{align*}
Integrating this equality from $t_0$ to $t$, we get
\begin{align}\label{e19}
    v(t)=-C_1^{\frac{1}{N-1}}(\gamma_1-N+\beta)^{\frac{1}{1-N}}\int_{t_0}^{t}\frac{\left(a-\tau^{N-\beta-\gamma_1}\right)^{\frac{1}{N-1}}}{\tau}d\tau+b,
\end{align}
for some $b\in\mathbb{R}$. 
Therefore, we obtain
\begin{align*}
    -Q_{N}v=C_1 \hat{F}^{o}(x)^{-\beta-\gamma_1}\geq \hat{F}^{o}(x)^{-\beta}e^u=-Q_{N}u\ \ \text{in\ }\mathbb{R}^{N}\backslash \overline{\mathcal{W}_{t_0}}.
\end{align*}
This means that $v$ is a supersolution to \eqref{e1} in an exterior domain.

Step 5. In this step, we show that for suitable choices of $a$ and $b$, $v(x)\geq u(x)$ holds for $x\in \mathbb{R}^{N}\backslash \overline{\mathcal{W}_{t_0}}$.

For each $\epsilon\in (0,\epsilon_0)$, we infer from \eqref{e16} that there exists large enough $R_\epsilon$ such that 
\begin{align}\label{e23}
    u(x)\leq -(\gamma-\epsilon)\log \hat{F}^{o}(x)\ \ \text{for\ }\hat{F}^{o}(x)\geq R_\epsilon.
\end{align}
Take
\begin{align}\label{e25}
    a=\frac{(\gamma-\epsilon)^{N-1}(\gamma_1+\beta-N)}{C_1},
\end{align}
since \eqref{e51}, we have $a\geq2t_0^{N-\beta-\gamma_1}$ provided $t_0$ large enough. By \eqref{e19}, we deduce that
\begin{align}\label{e20}
    v(x)&\geq-\left(\frac{a C_1}{\gamma_1-N+\beta}\right)^{\frac{1}{N-1}}\int_{t_0}^{\hat{F}^{o}(x)}\frac{1}{\tau}d\tau+b\nonumber\\
    &=-\left(\frac{a C_1}{\gamma_1-N+\beta}\right)^{\frac{1}{N-1}}\log \hat{F}^{o}(x)+C_2+b\nonumber\\
    &=-(\gamma-\epsilon)\log \hat{F}^{o}(x)+C_2+b,
\end{align}
where $C_2=(\gamma-\epsilon)\log t_0$. Now, for any $\epsilon\in (0,\epsilon_0)$, we may choose $b$ such that
\begin{align*}
    b\geq \max\{-C_2,\max_{\partial\mathcal{W}_{t_0}}u\}.
\end{align*}
From \eqref{e19}, we see that if $\hat{F}^{o}(x)=t_0$, then 
\begin{align*}
    v(x)=b\geq u(x),
\end{align*}
when $\hat{F}^{o}(x)\geq R_\epsilon$, it follows from \eqref{e23} and \eqref{e20} that
\begin{align}\label{e21}
    v(x)\geq -(\gamma-\epsilon)\log \hat{F}^{o}(x)\geq u(x).
\end{align}
Hence, $v(x)\geq u(x)$ on $\partial(\mathcal{W}_{R_\epsilon}\backslash \overline{\mathcal{W}_{t_0}})$. Recalling that $-Q_{N}v\geq-Q_{N}u$ in $\mathcal{W}_{R_\epsilon}\backslash \overline{\mathcal{W}_{t_0}}$. In view of the comparison principle and \eqref{e21}, we obtain
\begin{align}\label{e26}
    v(x)\geq u(x)\ \ \text{in\ }\mathbb{R}^{N}\backslash \mathcal{W}_{t_0}
\end{align}
for each $\epsilon\in (0,\epsilon_0)$.

Step 6. We claim that
\begin{align}\label{e24}
    u(x)+\gamma\log \hat{F}^{o}(x)\leq C\ \ \text{in\ }\mathbb{R}^{N}\backslash \mathcal{W}_{1},
\end{align}
for some constant $C$. By \eqref{e19} and \eqref{e25}, for any $\epsilon\in (0,\epsilon_0)$, we obtain
\begin{align}
    v(x)-b&=-\left(\frac{C_1}{\gamma_1-N+\beta}\right)^{\frac{1}{N-1}}\int_{t_0}^{\hat{F}^{o}(x)}\frac{\left(a-\tau^{N-\beta-\gamma_1}\right)^{\frac{1}{N-1}}}{\tau}d\tau\nonumber\\
    &=-\left(\frac{a C_1}{\gamma_1-N+\beta}\right)^{\frac{1}{N-1}}\int_{t_0}^{\hat{F}^{o}(x)}\frac{1}{\tau}d\tau+I\nonumber\\
    &=(-\gamma+\epsilon)(\log \hat{F}^{o}(x)-\log t_0)+I\label{e27},
\end{align}
where
\begin{align*}
    I=\left(\frac{C_1}{\gamma_1-N+\beta}\right)^{\frac{1}{N-1}}\int_{t_0}^{\hat{F}^{o}(x)}\frac{a^{\frac{1}{N-1}}-\left(a-\tau^{N-\beta-\gamma_1}\right)^{\frac{1}{N-1}}}{\tau}d\tau.
\end{align*}
Since $a\geq2t_0^{N-\beta-\gamma_1}$ and the function $s\mapsto s^{\frac{1}{N-1}}$ is concave for $s>0$, for $\tau>t_0$, we have
\begin{align*}
    0\leq a^{\frac{1}{N-1}}-\left(a-\tau^{N-\beta-\gamma_1}\right)^{\frac{1}{N-1}}&\leq\frac{\tau^{N-\beta-\gamma_1}}{N-1}\left(a-\tau^{N-\beta-\gamma_1}\right)^{\frac{2-N}{N-1}}\\
    &\leq\frac{\tau^{N-\beta-\gamma_1}}{N-1}t_0^{\frac{(N-\beta-\gamma_1)(2-N)}{N-1}},
\end{align*}
where we have used concave inequality $s_1^{\frac{1}{N-1}}-s_2^{\frac{1}{N-1}}\leq\frac{s_1-s_2}{N-1}s_2^{\frac{2-N}{N-1}}$ for any $s_1>s_2>0$. Hence,
\begin{align}
    I&\leq\left(\frac{C_1}{\gamma_1-N+\beta}\right)^{\frac{1}{N-1}}\int_{t_0}^{\hat{F}^{o}(x)}\frac{\tau^{N-\beta-\gamma_1-1}}{N-1}t_0^{\frac{(N-\beta-\gamma_1)(2-N)}{N-1}}d\tau\nonumber\\
    &=\left(\frac{C_1}{\gamma_1-N+\beta}\right)^{\frac{1}{N-1}}t_0^{\frac{(N-\beta-\gamma_1)(2-N)}{N-1}}\frac{\hat{F}^{o}(x)^{N-\beta-\gamma_1}-t_0^{N-\beta-\gamma_1}}{(N-1)(N-\beta-\gamma_1)}\nonumber\\
    &\leq\left(\frac{C_1}{\gamma_1-N+\beta}\right)^{\frac{1}{N-1}}\frac{t_0^{\frac{N-\beta-\gamma_1}{N-1}}}{(N-1)(\gamma_1-N+\beta)}\leq C,\label{e28}
\end{align}
where $C$ is independent of $\epsilon$. 
It follows from \eqref{e26}, \eqref{e27} and \eqref{e28} that
\begin{align*}
u(x)\leq v(x)\leq (-\gamma+\epsilon)(\log \hat{F}^{o}(x)-\log t_0)+C+b
\end{align*}
for $x\in\mathbb{R}^{N}\backslash \mathcal{W}_{t_0}$. Letting $\epsilon\rightarrow0$, we get \eqref{e24}.

Step 7. In view of \eqref{e15} and \eqref{e24}, we have
\begin{align}\label{e30}
    u(x)=-\gamma\log \hat{F}^{o}(x)+ H(x)\ \ \text{in\ }\mathbb{R}^{N}\backslash \mathcal{W}_{1},
\end{align}
where $H(x)\in L^{\infty}(\mathbb{R}^{N}\backslash \mathcal{W}_{1})$. Define
\begin{align*}
    u_m(x):=u(mx)+\gamma\log m,
\end{align*}
and
\begin{align*}
    H_m(x):=H(mx),
\end{align*}
for $m\in \mathbb{N}$. Then
\begin{align*}
    u_m(x)=-\gamma\log \hat{F}^{o}(x)+ H_m(x)
\end{align*}
and
\begin{align}
    -Q_Nu_m=m^{N-\beta} \hat{F}^{o}(x)^{-\beta}e^{u(mx)}=m^{N-\beta-\gamma}\hat{F}^{o}(x)^{-\beta-\gamma}e^{H_m(x)}.\label{e29}
\end{align}
Since $\gamma>N-\beta$, the right-hand side of \eqref{e29} is uniformly bounded in $L_{loc}^{\infty}(\mathbb{R}^{N}\backslash \{0\})$. Applying the standard elliptic regularity theory and Ascoli-Arzela’s theorem, we get $u_m\rightarrow u_{\infty}$ in $C_{loc}^{1}(\mathbb{R}^{N}\backslash \{0\})$ as $m\rightarrow+\infty$, where $u_{\infty}$ satisfies
\begin{align*}
    -Q_Nu_\infty=0\ \ \text{in\ }\mathbb{R}^{N}\backslash \{0\}.
\end{align*}
Consequently, passing to the limit in the decomposition of $u_m$ yields
\begin{align*}
    u_\infty(x)=-\gamma\log \hat{F}^{o}(x)+ H_\infty(x),
\end{align*}
with
\begin{align*}
    H_m(x)\rightarrow H_\infty(x)\ \ \text{in\ }C_{loc}^{1}(\mathbb{R}^{N}\backslash \{0\})\ \ \text{as}\ m\rightarrow+\infty.
\end{align*}
The uniform boundedness of $H(mx)$ gives $H_\infty(x)\in L^{\infty}(\mathbb{R}^{N})$. Using Lemma \ref{L4}, we obtain that $H_\infty(x)$ is a constant function.

From \eqref{e30},
\begin{align*}
    F(\nabla(u(x)+\gamma\log \hat{F}^{o}(x)))=F(\nabla H(x))=\frac{1}{m}F(\nabla H_m(y)),
\end{align*}
here $y=\frac{x}{m}$, this implies
\begin{align*}
    \sup_{\hat{F}^{o}(x)=m}\hat{F}^{o}(x)F(\nabla(u(x)+\gamma\log \hat{F}^{o}(x)))=\sup_{\hat{F}^{o}(y)=1}F(\nabla H_m(y)).
\end{align*}
Letting $m\rightarrow +\infty$, since $H_\infty(x)$ is a constant function, we get
\begin{align}\label{e31}
    \lim_{\hat{F}^{o}(x) \to +\infty}\hat{F}^{o}(x)F(\nabla(u(x)+\gamma\log \hat{F}^{o}(x)))=\sup_{\hat{F}^{o}(y)=1}F(\nabla H_\infty(y))=0.
\end{align}

Step 8. In this step, we will prove that $\gamma=\gamma_0$. By \eqref{e31}, we derive that
\begin{align*}
    \nabla u=-\gamma\nabla(\log \hat{F}^{o}(x))+o(\frac{1}{\hat{F}^{o}(x)})=-\gamma\frac{\nabla \hat{F}^{o}(x)}{\hat{F}^{o}(x)}+o(\frac{1}{\hat{F}^{o}(x)}).
\end{align*}
Hence, by Lemma \ref{L2.1}-(iv) and $F(\xi)=\hat{F}(-\xi)$ for $\xi\in\mathbb{R}^{N}$
\begin{align*}
    F(\nabla u)=\frac{\gamma}{\hat{F}^{o}(x)}+o(\frac{1}{\hat{F}^{o}(x)}).
\end{align*}
By Lemma \ref{L2.1}-(ii), (iii) and the divergence theorem, we have
\begin{align*}
    \int_{\mathcal{W}_{R}}\hat{F}^{o}(x)^{-\beta}e^u dx&=-\int_{\partial\mathcal{W}_{R}}F^{N-1}(\nabla u)\langle DF(\nabla u),\nu \rangle d\sigma(x)\\
    &=-\int_{\partial\mathcal{W}_{R}}F^{N-1}(\nabla u)\langle DF(\nabla u),\frac{\nabla \hat{F}^{o}(x)}{|\nabla \hat{F}^{o}(x)|} \rangle d\sigma(x)\\
    &=\frac{R}{\gamma}\int_{\partial\mathcal{W}_{R}}F^{N-1}(\nabla u)\langle DF(\nabla u),\frac{\nabla u+o(R^{-1})}{|\nabla \hat{F}^{o}(x)|} \rangle d\sigma(x)\\
    &=\frac{R}{\gamma}\int_{\partial\mathcal{W}_{R}}F^{N-1}(\nabla u) \frac{F(\nabla u)+o(R^{-1})}{|\nabla \hat{F}^{o}(x)|}  d\sigma(x)\\
    &=\int_{\partial\mathcal{W}_{R}}(\gamma R^{-1}+o(R^{-1}))^{N-1}\frac{1+o(1)}{|\nabla \hat{F}^{o}(x)|} d\sigma(x),
\end{align*}
where $\nu$ is the outer unit normal on $\partial\mathcal{W}_{R}$. Letting $R\rightarrow +\infty$, we obtain
\begin{align*}
    \int_{\mathbb{R}^{N}}\hat{F}^{o}(x)^{-\beta}e^u dx=\gamma^{N-1}\int_{\partial\mathcal{W}_{1}}\frac{1}{|\nabla \hat{F}^{o}(x)|} d\sigma(x)=N\kappa_N\gamma^{N-1},
\end{align*}
which means
\begin{align}\label{e32}
    \gamma=\left[\frac{\int_{\mathbb{R}^{N}}\hat{F}^{o}(x)^{-\beta}e^u dx}{N\kappa_N}\right]^{\frac{1}{N-1}}=\gamma_0.
\end{align}
Thus, combining \eqref{e30}, \eqref{e31} and \eqref{e32}, we can get the conclusions \eqref{e33} and \eqref{e34}. The proof is finished.
\end{proof}

\section{Geometric Shape of the Level Sets of $u$ and Classification of Solutions to Equation \eqref{e1}}

This section is devoted to studying the geometric shape of the level sets of $u$, and to completing the classification of solutions to Equation \eqref{e1}. To this end, we first prove the following lower-bound estimate of mass $\int_{\mathbb{R}^{N}}\hat{F}^{o}(x)^{-\beta}e^u dx$. 
\begin{lemma}\label{L2}
    Let $u$ be a solution of \eqref{e1}. Then
\begin{align}\label{e37}
\int_{\mathbb{R}^{N}}\hat{F}^{o}(x)^{-\beta}e^udx \geq N(\frac{N(N-\beta)}{N-1})^{N-1}\kappa_N.
\end{align}
\end{lemma}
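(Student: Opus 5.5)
We prove \eqref{e37} by a level-set argument in the spirit of Chen--Li and of Theorem \ref{L1}. We combine the weighted isoperimetric inequality of Lemma \ref{L3} with the equation integrated over superlevel sets. We then integrate the resulting differential inequality in the level parameter and use the behaviour at infinity from Proposition \ref{P4} to identify the total mass.

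\emph{Step 1: setup.} For $t<t_0:=\sup u$ set $\Omega_t=\{x:u(x)>t\}$. By Proposition \ref{P2} (or \eqref{e33}), $u\to-\infty$ at infinity. Near the origin, $u^+$ is bounded by Proposition \ref{P3.1}. Hence each $\Omega_t$ is bounded. Define
\begin{align*}
m(t)=\int_{\Omega_t}\hat F^o(x)^{-\beta}e^u\,dx,\qquad \mu(t)=\int_{\Omega_t}\hat F^o(x)^{-\beta}\,dx .
\end{align*}
By Sard's theorem and the $C^{1,\alpha}$ regularity away from $0$, almost every $t$ is a regular value of $u$ on $\partial\Omega_t\setminus\{0\}$. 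On $\partial\Omega_t$ the outer normal is $\nu=-\nabla u/|\nabla u|$. Integrating the equation over $\Omega_t$ and using Lemma \ref{L2.1}(iii) gives
\[
m(t)=\int_{\partial\Omega_t}\frac{F^{N}(\nabla u)}{|\nabla u|}\,d\sigma .
\]
The singularity at $0$ is harmless since $\beta<N$ and $u^+$ is bounded; one can excise a small Wulff ball $\mathcal W_\rho$ and let $\rho\to0$, using $u\in W^{1,N}_{loc}$.

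\emph{Step 2: differential inequality.} Note that $\hat F(\nu)=F(\nabla u)/|\nabla u|$ on $\partial\Omega_t$. Lemma \ref{L3} and H\"older's inequality then give, exactly as in the proof of Theorem \ref{L1},
\[
\lambda_\beta\,\mu(t)^{\frac{N-1}{N}}\le \int_{\partial\Omega_t}\hat F^o(x)^{-\frac{N-1}{N}\beta}\hat F(\nu)\,d\sigma\le m(t)^{\frac1N}\bigl(-\mu'(t)\bigr)^{\frac{N-1}{N}},
\]
where $\lambda_\beta=(N\kappa_N)^{1/N}(N-\beta)^{(N-1)/N}$. By the co-area formula, $-m'(t)=e^{t}(-\mu'(t))$. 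Substituting,
\[
-m'(t)\,m(t)^{\frac{1}{N-1}}\ \ge\ \lambda_\beta^{\frac{N}{N-1}}\,e^{t}\mu(t).
\]
Moreover $\frac{d}{dt}\bigl(e^t\mu(t)\bigr)=e^t\mu+e^t\mu'=e^t\mu+m'$, so integrating from $-\infty$ to $t_0$ gives $\int_{-\infty}^{t_0}e^{t}\mu(t)\,dt = \int_{-\infty}^{t_0}(-m')\,dt= M:=\int_{\mathbb R^N}\hat F^o(x)^{-\beta}e^u\,dx$. The boundary terms vanish: $e^t\mu(t)\to0$ as $t\to t_0$, and as $t\to-\infty$ because $\mu(t)$ grows at most like $e^{-t(N-\beta)/\gamma_0}$ with $\gamma_0>N-\beta$, by \eqref{e33}.

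\emph{Step 3: conclusion.} Integrate the differential inequality over $t\in(-\infty,t_0)$. The left side equals $\int_0^M m^{\frac1{N-1}}dm=\frac{N-1}{N}M^{\frac N{N-1}}$, and the right side equals $\lambda_\beta^{\frac N{N-1}}M$. Therefore
\[
M^{\frac{1}{N-1}}\ge \frac{N}{N-1}\lambda_\beta^{\frac N{N-1}} = \frac{N}{N-1}(N\kappa_N)^{\frac1{N-1}}(N-\beta),
\]
that is, $M\ge N\bigl(\frac{N(N-\beta)}{N-1}\bigr)^{N-1}\kappa_N$, which is \eqref{e37}.

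The main obstacle is rigor in Step 2. First, $\mu$ is only monotone, so we must use the inequality $-\mu'(t)\ge\int_{\partial\Omega_t}\hat F^o(x)^{-\beta}|\nabla u|^{-1}d\sigma$, which holds a.e. and has the right direction. Second, the set of critical points $\{\nabla u=0\}$ may have positive measure; there $u$ is locally constant, so this set contributes only jumps of $m$ and $\mu$ at countably many levels, and the jump contributions have the favorable sign. Third, the origin must be handled, which is done by the excision described in Step 1. These points follow the approximation arguments of \cite{CK,E}.
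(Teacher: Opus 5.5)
Your proof is correct and follows essentially the same route as the paper: superlevel sets $\Omega_t$, the flux identity $m(t)=\int_{\partial\Omega_t}F^N(\nabla u)|\nabla u|^{-1}d\sigma$, the weighted isoperimetric inequality of Lemma \ref{L3} combined with H\"older, and integration of $-\frac{d}{dt}m(t)^{N/(N-1)}$ over $(-\infty,t_0)$. Two small remarks. The paper obtains $\int_{-\infty}^{t_0}e^t\mu(t)\,dt=M$ directly by Fubini (layer-cake), which is cleaner than your integration by parts because it needs neither the decay from Proposition \ref{P4} nor any regularity of $\mu$. Also, $u$ need not be locally constant on a positive-measure critical set, but monotonicity of $m$ still gives the favorable inequality $\int_{-\infty}^{t_0}(-m')m^{1/(N-1)}dt\le\frac{N-1}{N}M^{N/(N-1)}$, so your conclusion stands.
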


\begin{proof}
    Let $t_0=\sup\limits_ {x\in\mathbb{R}^{N}}u(x)$. For $t\in\mathbb{R},$ let $\Omega_{t}=\left\{  x\in\mathbb{R}^{N}|u(x)>t\right\}.$

By the divergence theorem, we have
\begin{align}
\int_{\Omega_{t}}\hat{F}^{o}(x)^{-\beta}e^{u}dx &  =\int_{\Omega_{t}
}-\mathrm{div}(F^{N-1}(\nabla u)DF(\nabla u))dx\nonumber\\
&  =\int_{\partial\Omega_{t}}F^{N-1}(\nabla u)\Big\langle DF(\nabla u),\frac{\nabla u}{|\nabla u|}\Big\rangle
d\sigma(x)\nonumber\\
&  =\int_{\partial\Omega_{t}}\frac{F^{N}(\nabla u)}{|\nabla u|}d\sigma(x),\label{eq2}
\end{align}
the outer unit normal on $\partial\Omega_{t}$ is $-\frac{\nabla u}{|\nabla u|}$.
Using the co-area formula, we have%
\[
\int_{\Omega_{t}}\hat{F}^{o}(x)^{-\beta}e^{u}dx=\int_{t}^{\infty}e^{s}%
\int_{\partial\Omega s}\frac{\hat{F}^{o}(x)^{-\beta}}{|\nabla u|}%
d\sigma(x)ds.
\]
Hence,
\begin{equation*}
-\frac{d}{dt}\Big(\int_{\Omega_{t}}\hat{F}^{o}(x)^{-\beta}e^{u}dx\Big)=e^{t}\int_{\partial\Omega_{t}}\frac{\hat{F}^{o}(x)^{-\beta}}%
{|\nabla u|}d\sigma(x),
\end{equation*}
which combined with \eqref{eq2} and the co-area formula yields
\begin{align}
&  -\frac{d}{dt}\Big(\int_{\Omega_{t}}\hat{F}^{o}(x)^{-\beta}e^{u}dx\Big)^{\frac{N}{N-1}}\nonumber\\
&  =-\frac{N}{N-1}\Big(\int_{\Omega_{t}}\hat{F}^{o}(x)^{-\beta}e^{u}dx\Big)^{\frac{1}{N-1}}\frac{d}{dt}\Big(\int_{\Omega_{t}}\hat{F}^{o}
(x)^{-\beta}e^{u}dx\Big)\nonumber\\
&  =\frac{N}{N-1}\Big(\int_{\partial\Omega_{t}}\frac{F^{N}(\nabla u)}{|\nabla u|}d\sigma(x)\Big)^{\frac{1}{N-1}}e^{t}\int_{\partial\Omega_t}\frac{\hat{F}^{o}(x)^{-\beta}}{|\nabla u|}d\sigma(x).\label{eq4}
\end{align}
By using the weighted anisotropic isoperimetric inequality (\ref{eq1.3}) and
H\"{o}lder's inequality, we have
\begin{align}
\Big((N-\beta)(N\kappa_N)^{\frac{1}{N-1}}\int_{\Omega_{t}}\hat{F}^{o}(x)^{-\beta}dx\Big)^{\frac
{N-1}{N}}\leq &  \int_{\partial \Omega_t}\hat{F}^{o}(x)^{-\frac{N-1}{N}\beta}\hat{F}(\nu)d\sigma(x)\nonumber\\
= & \int_{\partial \Omega_t}\hat{F}^{o}(x)^{-\frac{N-1}{N}\beta}F(-\nu)d\sigma(x)\nonumber\\
= &  \int_{\partial\Omega_{t}}\frac{F(\nabla u)}{|\nabla u|}
\hat{F}^{o}(x)^{-\frac{N-1}{N}\beta}d\sigma(x)\nonumber\\
\leq &  \Big(\int_{\partial\Omega_{t}}\frac{F^{N}(\nabla u)}
{|\nabla u|}d\sigma(x)\Big)^{\frac{1}{N}}\Big(\int_{\partial\Omega_{t}
}\frac{\hat{F}^{o}(x)^{-\beta}}{|\nabla u|}d\sigma(x)\Big)^{\frac{N-1}{N}
}.\label{eq5}
\end{align}
Integrating the inequality (\ref{eq4}) from $-\infty$ to $t_0$, and using
(\ref{eq5}), we get%
\begin{align}
\Big(\int_{\mathbb{R}^{N}}\hat{F}^{o}(x)^{-\beta}e^{u}dx\Big)^{\frac
{N}{N-1}} &  =\int_{-\infty}^{t_0}-\frac{d}{dt}\Big(\int_{\Omega_{t}
}\hat{F}^{o}(x)^{-\beta}e^{u}dx\Big)^{\frac{N}{N-1}}dt\nonumber\\
&  =\frac{N}{N-1}\int_{-\infty}^{t_0}\Big(\int_{\partial\Omega_{t}}
\frac{F^{N}(\nabla u)}{|\nabla u|}d\sigma(x)\Big)^{\frac{1}{N-1}
}e^{t}\int_{\partial\Omega_t}\frac{\hat{F}^{o}(x)^{-\beta}}{|\nabla u
|}d\sigma(x)dt\nonumber\\
&  \geq\frac{N(N-\beta)}{N-1}(N\kappa_N)^{\frac{1}{N-1}}\int_{-\infty}^{t_0}e^{t}\int
_{\Omega_{t}}\hat{F}^{o}(x)^{-\beta}dxdt\nonumber\\
&  =\frac{N(N-\beta)}{N-1}(N\kappa_N)^{\frac{1}{N-1}}\int_{\mathbb{R}^{N}}\hat{F}^{o}(x)^{-\beta
}e^{u}dx\nonumber.
\end{align}
Consequently, it holds
\[
\int_{\mathbb{R}^{N}}\hat{F}^{o}(x)^{-\beta}e^{u}dx\geq N(\frac{N(N-\beta)}{N-1})^{N-1}\kappa_N.
\]
The proof is finished.
\end{proof}

\begin{remark}
    In the case that $\beta<0$, the above lower-bound estimate of mass cannot be obtained, even in the isotropic case $\hat{F}^{o}(x)=|x|$, due to the lack of the corresponding weighted isoperimetric inequality.
\end{remark}

Now, we establish the quantization result of the mass $\int_{\mathbb{R}^{N}}\hat{F}^{o}(x)^{-\beta}e^u dx$ as follows.

\begin{proposition}\label{P3}
    Let $u$ be a solution of \eqref{e1}. Then
\begin{align}
\int_{\mathbb{R}^{N}}\hat{F}^{o}(x)^{-\beta}e^udx = N(\frac{N(N-\beta)}{N-1})^{N-1}\kappa_N.\label{e36}
\end{align}
\end{proposition}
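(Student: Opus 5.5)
We combine the lower bound of Lemma \ref{L2} with an upper bound coming from the anisotropic Pohozaev identity (Lemma \ref{L5}), applied on Wulff annuli $\Omega=\mathcal{W}_R\backslash\overline{\mathcal{W}_\varepsilon}$, and then let $R\to+\infty$ and $\varepsilon\to0$. Write $m:=\int_{\mathbb{R}^{N}}\hat{F}^{o}(x)^{-\beta}e^udx$, so that $m=N\kappa_N\gamma_0^{N-1}$ by \eqref{e50}. Since $u$ is only $C^{1,\alpha}_{loc}$ away from the origin (Proposition \ref{P3.1}), the identity can be used on such annuli, possibly after the approximation argument of \cite{CK,E}. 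The terms $(N-\beta)\int_\Omega\hat{F}^{o}(x)^{-\beta}dx$ and $\int_{\partial\Omega}\hat{F}^{o}(x)^{-\beta}\langle x,\nu\rangle d\sigma$ cancel exactly by the divergence theorem, because $\mathrm{div}(\hat{F}^{o}(x)^{-\beta}x)=(N-\beta)\hat{F}^{o}(x)^{-\beta}$. Hence we only need to evaluate
\begin{align*}
(N-\beta)\int_\Omega\hat{F}^{o}(x)^{-\beta}e^{u}dx=\int_{\partial\Omega}\Big[\hat{F}^{o}(x)^{-\beta}e^{u}\langle x,\nu\rangle-\tfrac1N F^N(\nabla u)\langle x,\nu\rangle+F^{N-1}(\nabla u)\langle DF(\nabla u),\nu\rangle\langle x,\nabla u\rangle\Big]d\sigma .
\end{align*}

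\textbf{Outer boundary $\partial\mathcal{W}_R$.} Use Proposition \ref{P4}: $\nabla u=-\gamma_0\nabla\hat{F}^{o}(x)/\hat{F}^{o}(x)+o(R^{-1})$ and $e^{u}\le CR^{-\gamma_0}$ with $\gamma_0>N-\beta$. The term $\hat{F}^{o}(x)^{-\beta}e^{u}\langle x,\nu\rangle$ is then $O(R^{N-\beta-\gamma_0})\to0$.

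For the other two terms, put $\nu=\nabla\hat{F}^{o}/|\nabla\hat{F}^{o}|$. By Lemma \ref{L2.1}(iii) for $\hat{F}^o$, $\langle x,\nu\rangle=R/|\nabla\hat{F}^{o}|$ and $\langle x,\nabla u\rangle=-\gamma_0+o(1)$. Also $F(\nabla u)=\gamma_0/R+o(R^{-1})$. The key algebra is $DF(-\nabla\hat{F}^{o})=-D\hat{F}(\nabla\hat{F}^{o})=-x/\hat{F}^{o}(x)$, from Lemma \ref{L2.1}(v) applied to $\hat{F}$ together with (vi). This gives $\langle DF(\nabla u),\nu\rangle=-1/|\nabla\hat F^o|+o(1)$.

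Therefore the last two boundary terms contribute
\begin{align*}
\Big(\gamma_0^{N}-\tfrac1N\gamma_0^N\Big)R^{1-N}\int_{\partial\mathcal{W}_R}\frac{d\sigma}{|\nabla\hat{F}^o|}+o(1)=\frac{N-1}{N}\gamma_0^N\, N\kappa_N+o(1).
\end{align*}
Here we used $\int_{\partial\mathcal{W}_R}|\nabla\hat F^o|^{-1}d\sigma=N\kappa_NR^{N-1}$, which also appears in Step 8 of Proposition \ref{P4}.

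\textbf{Inner boundary $\partial\mathcal{W}_\varepsilon$.} This is the step I expect to be the main obstacle, since we need $\varepsilon\int_{\partial\mathcal{W}_\varepsilon}(\hat F^{o})^{-\beta}e^u+F^N(\nabla u)\,d\sigma\to0$ along some sequence $\varepsilon_k\to0$, and we have no a priori gradient control near $0$.

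The plan has three parts.
\begin{itemize}
\item The term with $e^u$ is harmless. Since $u^+\in L^\infty$, it is bounded by $C\varepsilon^{N-\beta}\to0$ because $\beta<N$.
\item For the gradient, the right-hand side $f=\hat{F}^{o}(x)^{-\beta}e^u\le C\hat{F}^{o}(x)^{-\beta}$ lies in $L^{q}$ near $0$ for some $q>1$. So $u$ is a solution across the origin (the point is removable for $W^{1,N}$) and is bounded near $0$; a lower bound follows from the comparison principle on $\mathcal{W}_1$.
\item Rescale $w_\varepsilon(y)=u(\varepsilon y)$ on $\mathcal{W}_2\backslash\mathcal{W}_{1/2}$. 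Then $-Q_Nw_\varepsilon=\varepsilon^{N-\beta}\hat{F}^{o}(y)^{-\beta}e^{u(\varepsilon y)}\to0$. Combined with the boundedness of $w_\varepsilon$ and the $C^{1,\alpha}$ estimates, this gives $\varepsilon|\nabla u|\le C$ on $\partial\mathcal{W}_\varepsilon$.
\end{itemize}
The third part is still not sufficient by itself, since we need $\varepsilon|\nabla u|\to0$. For that I would use oscillation decay: the limit of $w_\varepsilon$ is a bounded $N$-harmonic function on $\mathbb{R}^N\backslash\{0\}$, which is constant by Lemma \ref{L4} with $\gamma=0$. Hence $\nabla w_\varepsilon\to0$ in $C^1_{loc}$, and the inner boundary terms vanish. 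If this argument fails, an alternative is an $\varepsilon$-averaged version of the identity, using $\int_{\mathcal{W}_1}F^N(\nabla u)<\infty$ to select good radii.

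\textbf{Conclusion.} Letting $\varepsilon\to0$ and then $R\to\infty$ gives $(N-\beta)m=\frac{N-1}{N}N\kappa_N\gamma_0^N$. Substituting $m=N\kappa_N\gamma_0^{N-1}$ yields $\gamma_0=\frac{N(N-\beta)}{N-1}$, hence
\[
m=N\Big(\frac{N(N-\beta)}{N-1}\Big)^{N-1}\kappa_N,
\]
which is \eqref{e36}. This is consistent with Lemma \ref{L2}, so equality holds there.
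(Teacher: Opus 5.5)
Your proposal is correct, and its skeleton matches the paper's. You apply the Pohozaev identity on $\mathcal{W}_r\backslash\mathcal{W}_\varepsilon$ and evaluate the outer boundary with Proposition \ref{P4}; your $-\kappa_N\gamma_0^N+N\kappa_N\gamma_0^N=(N-1)\kappa_N\gamma_0^N$ is exactly the paper's computation. Then $\gamma_0=\frac{N(N-\beta)}{N-1}$ follows from $m=N\kappa_N\gamma_0^{N-1}$.

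Where you genuinely differ is the inner boundary. The paper never controls $\nabla u$ pointwise near $0$. It bounds the mixed term by $\frac{\eta^2}{\alpha^2}\varepsilon\int_{\partial\mathcal{W}_\varepsilon}F^N(\nabla u)|\nabla\hat{F}^{o}|^{-1}d\sigma$ and argues by contradiction. If $\varepsilon$ times the boundary integrals stayed above some $\tau>0$, integrating $\tau/s$ over $(0,\varepsilon_0)$ and using the co-area formula would contradict $\int_{\mathcal{W}_{\varepsilon_0}}\big(\hat{F}^{o}(x)^{-\beta}e^u+F^N(\nabla u)\big)dx<\infty$. This is exactly your ``$\varepsilon$-averaged'' fallback. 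It yields only a $\liminf$, i.e. vanishing along some $\varepsilon_k\to0$, which suffices, and it uses nothing beyond finite mass and $\nabla u\in L^N$ near $0$.

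Your blow-up argument gives the stronger statement $\varepsilon\sup_{\partial\mathcal{W}_\varepsilon}|\nabla u|\to0$, hence a genuine limit. The price is two-sided boundedness of $u$ near $0$, uniform $C^{1,\alpha}$ bounds for $w_\varepsilon$, and Lemma \ref{L4} with $\gamma=0$. The lemma enters through the usual subsequence argument: every sequence $\varepsilon_k\to0$ has a subsequence with $w_{\varepsilon_k}\to$ const in $C^1_{loc}(\mathbb{R}^N\backslash\{0\})$. So you get $\nabla w_\varepsilon\to0$ locally uniformly, not ``in $C^1_{loc}$'' as you wrote.

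Two precision points. First, both routes tacitly need $u\in W^{1,N}_{loc}(\mathbb{R}^N)$, i.e. that $u$ solves the equation across the origin; otherwise a negative Dirac mass at $0$ cannot be excluded. The paper uses this silently when it integrates by parts on $\mathcal{W}_{\varepsilon_0}$. The removability comes from $\nabla u\in L^N$ near $0$ together with the zero $N$-capacity of a point, not from $f\in L^q$ as your wording suggests. The lower bound for $u$ near $0$ is then simply the minimum principle for the supersolution $u$ on $\mathcal{W}_1$.

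Second, Lemma \ref{L2} is not actually needed in your argument, since the identity determines $\gamma_0$ by itself. The bound $\gamma_0>N-\beta$, used to kill the $e^u$ term at infinity, is already Step 3 of Proposition \ref{P4}; the paper instead takes it from Lemma \ref{L2}.
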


\begin{proof}
Given $r>0,$ for any $0<\varepsilon<r$, using the Pohozaev identity (Lemma \ref{L5}) for
anisotropic equation \eqref{e1} in $\mathcal{W}_{r}\backslash \mathcal{W}_{\varepsilon}$, which reads as
\begin{align}
&  (N-\beta)\int_{\mathcal{W}_{r}\backslash \mathcal{W}_{\varepsilon}}\hat{F}^{o}(x)^{-\beta
}e^{u}dx\nonumber\\
&  =r^{-\beta+1}\int_{\partial \mathcal{W}_{r}}\frac{e^{u}}{|\nabla \hat{F}^{o}
(x)|}d\sigma(x) -\frac{r}{N} \int_{\partial \mathcal{W}_{r}}%
\frac{F^{N}(\nabla u)}{|\nabla \hat{F}^{o}(x)|}d\sigma(x)\nonumber\\
&  +\int_{\partial \mathcal{W}_{r}}F^{N-1}(\nabla u)\langle DF(\nabla u),\nu \rangle\langle x,\nabla u \rangle d\sigma(x)\nonumber\\
&  -\varepsilon^{-\beta+1}\int_{\partial \mathcal{W}_{\varepsilon}}\frac{e^{u}%
}{|\nabla \hat{F}^{o}(x)|}d\sigma(x)+\frac{\varepsilon}{N}
\int_{\partial \mathcal{W}_{\varepsilon}}\frac{F^{N}(\nabla u)}{|\nabla
\hat{F}^{o}(x)|}d\sigma(x)\nonumber\\
&-\int_{\partial \mathcal{W}_{\varepsilon}}F^{N-1}(\nabla u)\langle DF(\nabla u),\nu \rangle\langle x,\nabla u \rangle d\sigma(x),\label{eq16}
\end{align}
where $\nu$ is the outer unit normal. We next claim that
\begin{align*}
\liminf_{\varepsilon\rightarrow0}\Big(\varepsilon^{-\beta+1}\int
_{\partial\mathcal{W}_{\varepsilon}}\frac{e^{u}}{|\nabla \hat{F}^{o}
(x)|}d\sigma(x)-\frac{\varepsilon}{N}\int_{\partial
\mathcal{W}_{\varepsilon}}\frac{F^{N}(\nabla u)}{|\nabla \hat{F}^{o}
(x)|}d\sigma(x)\\
+\int_{\partial \mathcal{W}_{\varepsilon}}F^{N-1}(\nabla u)\langle DF(\nabla u),\nu \rangle\langle x,\nabla u \rangle d\sigma(x)\Big)=0.
\end{align*}
From \eqref{e39} and Lemma \ref{L2.1}-(iv), we derive that
\begin{align}\label{e41}
F(\nabla \hat{F}^{o}(x))\leq\eta\vert\nabla \hat{F}^{o}(x)\vert=\eta\vert-\nabla \hat{F}^{o}(x)\vert\leq\frac{\eta}{\alpha}F(-\nabla \hat{F}^{o}(x))=\frac{\eta}{\alpha}\hat{F}(\nabla \hat{F}^{o}(x))=\frac{\eta}{\alpha}
\end{align}
and
\[
\hat{F}(\nabla u)=F(-\nabla u)\leq \eta\vert\nabla u\vert\leq \frac{\eta}{\alpha}F(\nabla u).
\]
In view of Lemma \ref{L2.1}-(iv) and the fact that $\left\langle
x,\xi\right\rangle \leq F^{o}(x)F(\xi)$ (or $\left\langle
x,\xi\right\rangle \leq \hat{F}^{o}(x)\hat{F}(\xi)$), we have
\begin{align*}
&\int_{\partial \mathcal{W}_{\varepsilon}}F^{N-1}(\nabla u)\langle DF(\nabla u),\nu \rangle\langle x,\nabla u \rangle d\sigma(x)\nonumber\\
&\leq\int_{\partial \mathcal{W}_{\varepsilon}}F^{N-1}(\nabla u)F^{o}(DF(\nabla u))\frac{F(\nabla \hat{F}^{o}(x))}{|\nabla \hat{F}^{o}(x)|}\hat{F}^{o}(x)\hat{F}(\nabla u)d\sigma(x)\nonumber\\
&\leq\varepsilon\frac{\eta^2}{\alpha^2}\int_{\partial \mathcal{W}_{\varepsilon}}\frac{F^{N}(\nabla u)}{|\nabla \hat{F}^{o}(x)|}d\sigma(x).
\end{align*}
Hence, we only need to prove that
\begin{align}
\liminf_{\varepsilon\rightarrow0}\Big(\varepsilon^{-\beta+1}\int
_{\partial\mathcal{W}_{\varepsilon}}\frac{e^{u}}{|\nabla \hat{F}^{o}%
(x)|}d\sigma(x)+\big(\frac{\eta^2}{\alpha^2}-\frac{1}{N}\big)\varepsilon\int_{\partial
\mathcal{W}_{\varepsilon}}\frac{F^{N}(\nabla u)}{|\nabla \hat{F}^{o}%
(x)|}d\sigma(x)\big)=0.\label{eq17}%
\end{align}
Indeed, if (\ref{eq17}) does not hold, then we can find some $\varepsilon_{0}\leq1$ and $\tau>0$ such that
\[
\varepsilon^{-\beta}\int
_{\partial\mathcal{W}_{\varepsilon}}\frac{e^{u}}{|\nabla \hat{F}^{o}
(x)|}d\sigma(x)+\big(\frac{\eta^2}{\alpha^2}-\frac{1}{N}\big)\int_{\partial
\mathcal{W}_{\varepsilon}}\frac{F^{N}(\nabla u)}{|\nabla \hat{F}^{o}
(x)|}d\sigma(x)\geq\frac{\tau}{\varepsilon},
\]
for any $\varepsilon\leq\varepsilon_{0}$. Integrating the above inequality from $0$ to $\varepsilon_{0}$, we have
\begin{align}
+\infty &  =\int_{0}^{\varepsilon_{0}}\frac{\tau}{s}ds\leq\int_{0}
^{\varepsilon_{0}}\Big(s^{-\beta}\int_{\partial\mathcal{W}_{s}}\frac
{e^{u}}{|\nabla \hat{F}^{o}(x)|}d\sigma(x)+\big(\frac{\eta^2}{\alpha^2}-\frac{1}{N}
\big)\int_{\partial\mathcal{W}_{s}}\frac{F^{N}(\nabla u)}{|\nabla
\hat{F}^{o}(x)|}d\sigma(x)\Big)ds\nonumber\\
&  =\int_{\mathcal{W}_{\varepsilon_{0}}}\hat{F}^{o}(x)^{-\beta}e^{u
}dx+\big(\frac{\eta^2}{\alpha^2}-\frac{1}{N}\big)\int_{\mathcal{W}
_{\varepsilon_{0}}}F^{N}(\nabla u)dx.\label{eq18}
\end{align}
According to assumption \eqref{e1}, we get
\begin{align*}
\int_{\mathcal{W}_{\varepsilon_{0}}}\hat{F}^{o}(x)^{-\beta}e^{u}dx<+\infty.
\end{align*}
By the divergence theorem and the fact that $\left\langle
x,\xi\right\rangle \leq F^{o}(x)F(\xi)$, we have
\begin{align*}
\int_{\mathcal{W}_{\varepsilon_{0}}}F^{N}(\nabla u)dx= &  \int
_{\partial\mathcal{W}_{\varepsilon_{0}}}u F^{N-1}(\nabla u)
\Big\langle DF(\nabla u),\frac{\nabla \hat{F}^{o}(x)}{|\nabla
\hat{F}^{o}(x)|}\Big\rangle d\sigma(x)\\
&  ~-\int_{\mathcal{W}_{\varepsilon_{0}}}u\mathrm{div}(F^{N-1}(\nabla
u)DF(\nabla u))dx\\
\leq &  \int_{\partial\mathcal{W}_{\varepsilon_{0}}}u F^{N-1}(\nabla
u)F^{o}(DF(\nabla u))\frac{F(\nabla \hat{F}^{o}(x))}{|\nabla
\hat{F}^{o}(x)|}d\sigma(x)+\int_{\mathcal{W}_{\varepsilon_{0}}}u \hat{F}^{o
}(x)^{-\beta}e^{u}dx\\
\leq & \frac{\eta}{\alpha}\int_{\partial\mathcal{W}_{\varepsilon_{0}}}\frac{u F^{N-1}(\nabla u)}{|\nabla\hat{F}^{o}(x)|}d\sigma(x)+\int_{\mathcal{W}_{\varepsilon_{0}}}u \hat{F}^{o}(x)^{-\beta}e^{u}dx,
\end{align*}
where we have used Lemma \ref{L2.1}-(iv) and \eqref{e41} in the last inequality. By Lemma \ref{L2.1}-(ii), the finite mass condition in (\ref{e1}) and Proposition \ref{P3.1}, we have
\begin{align*}
\int_{\mathcal{W}_{\varepsilon_{0}}}F^{N}(\nabla u)dx<+\infty.
\end{align*}
Therefore, (\ref{eq18}) is impossible since the last terms are finite. Thus the
claim (\ref{eq17}) is proved. Letting $\varepsilon\rightarrow0$ in
(\ref{eq16}), we then get
\begin{align}
 (N-\beta)\int_{\mathcal{W}_{r}}\hat{F}^{o}(x)^{-\beta
}e^{u}dx
&  =r^{-\beta+1}\int_{\partial \mathcal{W}_{r}}\frac{e^{u}}{|\nabla \hat{F}^{o}%
(x)|}d\sigma(x) -\frac{r}{N} \int_{\partial \mathcal{W}_{r}}%
\frac{F^{N}(\nabla u)}{|\nabla \hat{F}^{o}(x)|}d\sigma(x)\nonumber\\
&  +\int_{\partial \mathcal{W}_{r}}F^{N-1}(\nabla u)\langle DF(\nabla u),\nu \rangle\langle x,\nabla u \rangle d\sigma(x).\label{e35}
\end{align}
Recalling that $\lim\limits_{\hat{F}^{o}(x) \to +\infty}\hat{F}^{o}(x)F(\nabla(u(x)+\gamma_0\log \hat{F}^{o}(x)))=0$, we infer that
\begin{align*}
\langle x,\nabla u \rangle=-\gamma_0+o(1),\ \ F(\nabla u)=\frac{\gamma_0}{\hat{F}^{o}(x)}+o(\frac{1}{\hat{F}^{o}(x)})
\end{align*}
and
\begin{align*}
\langle DF(\nabla u),\nu \rangle=-\frac{1}{|\nabla \hat{F}^{o}(x)|}+o(1)
\end{align*}
uniformly for $x\in \partial \mathcal{W}_{r}$, as $r\to +\infty$. Thus, 
\begin{align*}
&-\frac{r}{N} \int_{\partial \mathcal{W}_{r}}%
\frac{F^{N}(\nabla u)}{|\nabla \hat{F}^{o}(x)|}d\sigma(x)+\int_{\partial \mathcal{W}_{r}}F^{N-1}(\nabla u)\langle DF(\nabla u),\nu \rangle\langle x,\nabla u \rangle d\sigma(x)\nonumber\\
\rightarrow&-\kappa_N\gamma^{N}_0+N\kappa_N\gamma^{N}_0=(N-1)\kappa_N\gamma^{N}_0,
\end{align*}
as $r\to +\infty$. Since $u(x)+\gamma_0\log \hat{F}^{o}(x)\in L^{\infty}(\mathbb{R}^{N}\backslash \mathcal{W}_{1})$, one has
\begin{align*}
e^{u}\leq C(\hat{F}^{o}(x))^{-\gamma_0},
\end{align*}
as $\hat{F}^{o}(x)\to +\infty$. By Lemma \ref{L2}, we know that $\gamma_0\geq \frac{N(N-\beta)}{N-1}>N-\beta$. Hence
\begin{align*}
r^{-\beta+1}\int_{\partial \mathcal{W}_{r}}\frac{e^{u}}{|\nabla \hat{F}^{o}(x)|}d\sigma(x)\leq Cr^{N-\beta-\gamma_0}\rightarrow 0,
\end{align*}
as $r\rightarrow+\infty$.

Consequently, letting $r\rightarrow+\infty$ in \eqref{e35}, we conclude
\begin{align*}
 (N-\beta)\int_{\mathbb{R}^{N}}\hat{F}^{o}(x)^{-\beta
}e^{u}dx=(N-1)\kappa_N\gamma^{N}_0.
\end{align*}
This together with the definition of $\gamma_0$ in \eqref{e50} implies \eqref{e36} holds. The proof is finished.
\end{proof}

Based on the above lower bound estimate (Lemma \ref{L2}) and quantization result (Proposition \ref{P3}) for the mass, we are now ready to give the proof of Theorem \ref{Th1}.

\begin{proof}
    [Proof of Theorem \ref{Th1}]Proposition \ref{P3} implies the equality in (\ref{e37}), and hence the equality holds in the weighted anisotropic isoperimetric inequality used in the proof of the Lemma \ref{L2}. From Lemma \ref{L3}, $\Omega_{t}=\left\{  x\in\mathbb{R}^{N}|u(x)>t\right\}$ must be a
Wulff ball with center at the origin, i.e., $\Omega_{t}=$ $\mathcal{W}%
_{R(t)}$ for some $R(t)>0$. This implies the solution $u$ to \eqref{e1} must be radial in the sense of Finsler metric $\hat{F}^{o}$.

Denote by
\[
\omega(\hat{F}^{o}(x)^{\frac{N-\beta}{N}})=u(\hat{F}^{o}(x))-N\log\Big(\frac{N-\beta}{N}\Big).
\]
It is easy to check that $\omega$ is also a radial function centered on the origin and satisfies
\begin{align*}
\begin{cases}
    -Q_{N}\omega=e^\omega& \text{in } \mathbb{R}^{N}, \\
    \int_{\mathbb{R}^{N}}e^\omega dx<+\infty.
\end{cases}
\end{align*}
Thanks to the classification result in \cite{CL}, we obtain the following family of radial solutions
\begin{align*}
\omega(\hat{F}^{o}(x)^{\frac{N-\beta}{N}})=\log\frac{N(\frac{N^2}{N-1})^{N-1}\lambda^N}{\left[1+\lambda^{\frac{N}{N-1}}\hat{F}^{o}(x)^{\frac{N-\beta}{N-1}}\right]^N}
\end{align*}
for all $\lambda>0$. Hence,
\[
u(x)=\log\frac{(\frac{N}{N-1})^{N-1}(N-\beta)^{N}\lambda^N}{\left[1+\lambda^{\frac{N}{N-1}}\hat{F}^{o}(x)^{\frac{N-\beta}{N-1}}\right]^N},
\]
and the proof is finished.
\end{proof}

\end{document}